\NewDocumentCommand{\tubr}{o m}{%
  \tikz[baseline=(X.base)]{
    \def\bracketgap{1.8pt}
    \node[inner sep=0pt, outer sep=0pt] (X) {$#2$};
    \draw[line width=0.4pt] 
      ([yshift=-\bracketgap]X.south west) ++(0,\IfValueTF{#1}{#1}{2pt}) 
      -- ([yshift=-\bracketgap]X.south west)
      -- ([yshift=-\bracketgap]X.south east)
      -- ++(0,\IfValueTF{#1}{#1}{2pt});
  }%
}
\newcommand{\thistheoremname}{}
\newtheorem*{genericthm*}{\thistheoremname}
\newenvironment{namedthm*}[1]
  {\renewcommand{\thistheoremname}{#1}%
   \begin{genericthm*}}
  {\end{genericthm*}}
\newcommand{\Dp}{{\rm dp}}
\def\keywords{\xdef\@thefnmark{}\@footnotetext}
\renewcommand\S{\mathcal S}
\newcommand\C{\mathcal C}
\newtheorem{theorem}{Theorem}[section]
\newtheorem{lemma}[theorem]{Lemma}
\newtheorem{proposition}[theorem]{Proposition}
\newtheorem{corollary}[theorem]{Corollary}
\newtheorem{conjecture}[theorem]{Conjecture}
\theoremstyle{definition}
\newtheorem{definition}[theorem]{Definition}
  \newtheorem{question}[theorem]{Question}
    \newtheorem{remark}[theorem]{Remark}
    \newtheorem{example}[theorem]{Example}
\title{Characterizing avoidance in cycles via vincular patterns}
\author{Robert P. Laudone}
\affil{{\small Department of Mathematics, United States Naval Academy, Annapolis, MD, 21402}\\{\small Email: laudone@usna.edu }}
\date{}
\begin{document}

\maketitle

\begin{abstract}
    We show that cyclic permutations avoiding $321$ are precisely those permutations whose image under the fundamental bijection avoid a set of vincular patterns. We do this by using pattern functions and arrow patterns, in combination with the characterization of $321$ avoidance in terms of equality of the upper bound of the Daiconis-Graham inequalities. We then explore some consequences of this result, including upper and lower bound results on the growth rate of $321$ avoiding cycles.
\end{abstract}

\keywords{{\bf Keywords:} Pattern avoidance; pattern functions; cyclic permutations.}
\section{Introduction}

Richard Stanley originally posed the question of determining the number of cyclic permutations of length $n$ that avoid a permutation $\sigma \in \S_3$ in 2007 at the Permutation Patterns conference, we denote these permutations by $c_n(\sigma)$. Since then, some progress has been made in the study of pattern avoidance in cyclic permutations. For example, the authors of \cite{AE14, BC19} enumerated all cyclic permutations avoiding a pair of permutations with the exception of the pair $c_n(132,213)$. For this remaining case, bounds were established in \cite{H19}. The number of cyclic permutations avoiding a consecutive 123 or 321 pattern, an instance of vincular avoidance (see Definition \ref{def: vinc}), were enumerated in \cite{ET19}. In \cite[Conjecture 5.2]{BC19}, B\'ona and Cory conjectured $2c_{n-1}(\sigma) \leq c_n(\sigma) \leq 4 c_{n-1}(\sigma)$ for $\sigma \in \S_3$. Recently, in \cite{AGL25}, we proved (and sometimes improved) the lower bounds of these conjectures for $\sigma \not= 123$ by showing cyclic permutations avoiding a given pattern admit a partial groupoid structure.

Despite this progress, the question of enumerating cyclic permutations avoiding one pattern of size three remains open.  One of the main challenges in studying pattern avoidance in cyclic permutations is that avoidance is defined using one-line notation, which does not readily reveal a permutation’s cyclic structure. Several efforts have been made to develop tools that bridge the gap between a permutation’s one-line and cycle notations. For instance, work done in \cite{ABBGJ23} examines avoidance in a cyclic permutation's one-line and cycle form. In \cite{AL24}, we studied how avoidance in one-line notation interacts with avoidance in the image of the so-called \emph{fundamental bijection}. In \cite{BT22}, Berman and Tenner introduced pattern functions and arrow patterns to enumerate shallow permutations--originally defined by an equality of the lower bound of the Diaconis-Graham inequalities \cite{DG77}. 


Berman and Tenner's approach also relies on the fundamental bijection, which they use to characterize shallow permutations as those whose fundamental images avoid certain vincular patterns. It is well known that avoiding $321$ is equivalent to the upper bound of the Diaconis-Graham inequality, i.e. permutations whose depth is equal to their inversion number. In this paper, we use this fact along with pattern functions and arrow avoidance to characterize $321$ avoiding cyclic permutations in terms of those permutations whose fundamental bijection avoids a set of vincular patterns. We show,
\begin{namedthm*}{Theorem \ref{thm: 321-final}}
    For $n \geq 1$,
    \[
    \C_n(321) = \theta^{-1}(S_{n,n}(\tubr{32}\, \tubr{41}, \tubr{14}\,\tubr{23}, \tubr{41}\,\tubr{32}, \tubr{23}\,\tubr{14}, \tubr{23} \, \tubr{1})).
    \]
    Where $\C_n(321)$ is the cyclic permutations of length $n$ avoiding $321$, $\theta$ is the fundamental bijection, and $S_{n,n}$ are the permutations of length $n$ with $\pi_1 = n$.
\end{namedthm*}

An immediate consequence of this result is 

\begin{namedthm*}{Corollary \ref{cor: 321-A}}
    For $n \geq 2$,
    \[
    |\C_n(321)| = |\S_{n-1}(\tubr{32}\, \tubr{41}, \tubr{14}\,\tubr{23}, \tubr{41}\,\tubr{32}, \tubr{23}\,\tubr{14}, \tubr{23} \, \tubr{1},\tubr{1} \, \tubr{32})|.
    \]
\end{namedthm*}

This result was somewhat surprising to us because the vincular patterns are small, exhibit nice symmetries and only six patterns are necessary. In Section \ref{sec: background}, we outline all the necessary notation and definitions. In Section \ref{sec: mainResult}, we prove Theorem \ref{thm: 321-final} and Corollary \ref{cor: 321-A}. In the following sections we explore the implications of this result. For example, in Section \ref{sec: convergence}, we show all the cyclic permutations avoiding $321$ admit a semigroup structure and use this to prove $\lim_{n \to \infty} \sqrt[n]{c_n(321)}$ exists. In Section \ref{sec:bounds}, we provide another proof that $\lim_{n \to \infty} \sqrt[n]{c_n(321)} \geq 3.141$, and up to one additional assumption prove the upper bound of B\'ona and Cory's conjecture \cite{BC19}, $c_n(321) \leq 4 c_{n-1}(321)$ for $n \geq 2$. We pose many open questions along the way and conclude with some future directions. Our ultimate hope is that this reframed problem of enumerating vincular avoidance will be more accessible.



\section{Background and Notation} \label{sec: background}

Let $\S_n$ denote the symmetric group on $[n] = \{1,2,\dots,n\}$. We will focus on two representations of a permutation. The first is \textit{one-line notation}, $\pi = \pi_1\pi_2 \ldots \pi_n$ where $\pi_i = \pi(i)$. The second is \emph{cycle notation}, expressing $\pi$ as a product of disjoint cycles.  In this paper, we will always express permutation in their \emph{standard cycle notation}, in which we write cycles with their largest element first, and order the cycles by that largest element. For example, the permutation $\pi = 964572813$ written in its one-line notation, can also be expressed in standard cycle notation as $\pi = (6,2)(9,3,4,5,7,8,1)$. We say that a permutation is \emph{cyclic} if its cycle form consists of a single cycle. We denote the cyclic permutations of length $n$ by $\C_n$. We denote by $\S_{n,k}$ the permutations in $\S_n$ with $\pi_1 = k$.

We say that a permutation $\pi = \pi_1\cdots\pi_n$, written in one-line notation, \emph{contains} $\sigma = \sigma_1\sigma_2\ldots \sigma_k$ if there are indices $i_1 < i_2 < \cdots < i_k$ with $\pi_{i_r} < \pi_{i_s}$ if and only if $\sigma_r < \sigma_s$. In this case, we also say that $\pi_{i_1}\ldots \pi_{i_k}$ is order isomorphic to $\sigma$ and write ${\rm red}(\pi_{i_1}\ldots \pi_{i_k}) = \sigma$. If $\pi$ does not contain $\sigma$, we say $\pi$ \emph{avoids} $\sigma$. We denote the permutations of length $n$ avoiding a permutation $\sigma$ by $\S_n(\sigma)$ and similarly denote the cyclic permutations of length $n$ avoiding a permutation $\sigma$ by $\C_n(\sigma)$.

The \emph{fundamental bijection} $\theta:\S_n\to\S_n$ (described in \cite[Page 30]{S11}) uses both the one-line and standard cycle notation of permutations. For a permutation $\pi \in \S_n$, we obtain $\theta(\pi)$ by writing $\pi$ in its standard cycle notation and reading a new permutation $\theta(\pi)$ in its one-line notation, i.e., by removing the parentheses from the standard cycle notation of $\pi$. For example, using the example in the previous paragraph, we can see that $\theta(964572813) = 629345781.$ For clarity of exposition, we will often write $\theta(\pi)$ as $\hat\pi$. 

Given $\sigma = \sigma_1 \cdots \sigma_n \in \S_n$ and $\tau = \tau_1 \cdots \tau_m \in \S_m$, their \textit{direct sum} is $\sigma \oplus \tau = \sigma_1 \cdots \sigma_n (\tau_1 + n) \cdots (\tau_m + n)$ and their \text{skew sum} is $\sigma \ominus \tau = (\sigma_1 + m) \cdots (\sigma_n + m) \tau_1\cdots \tau_m$. Given a permutation $\pi = \pi_1\ldots\pi_n \in \S_n$, the \textit{reverse} of $\pi$, denoted $\pi^r$ is $\pi_n \ldots \pi_1$. 

\begin{definition} \label{def: vinc}
    A \textit{vincular} pattern is a permutation $\sigma$ in which certain consecutive entries are marked. A permutation $\pi \in \S_n$ contains a vincular pattern if it contains a sub-permutation in the same relative order as $\sigma$ where entries in the sub-permutation corresponding to marked entries in $\sigma$ must occur consecutively in $\pi$. If the first (or last) entry of $\sigma$ is marked, this means the corresponding entry of the sub-permutation must be the first (or last) entry of $\pi$. In keeping with the notation of \cite{BT22}, we will use underbrackets to denote the marked entries.
\end{definition}

For example $\sigma = \tubr{21} \, \tubr{3}$ is a vincular pattern. $\pi = 3214$ contains $\sigma$ since $\tubr{\pi_1\pi_2}\,\tubr{\pi_3} = 324$ is a sub-permutation in the same relative order as $\sigma$ and the marked entries $32$ occur consecutively in $\pi$ and $4$ is the final entry of $\pi$. However, $\pi = 2413$ does not contain $\sigma$ because the only occurrence of $213$ in $\pi$ is $\pi_1\pi_3\pi_4$, but $\pi_1$ and $\pi_3$ are not consecutive in $\pi$.
    
The \textit{count} of a pattern $\sigma$ in $\pi$ is the number of times that $\sigma$ occurs as a sub-permutation of $\pi$. We denote this count by $[\sigma](\pi)$. These counts and their linear combinations are called \textit{pattern functions} (see \cite{BT22}). Many common properties and statistics of permutations can be characterized in terms of pattern functions. For example, the number of descents in a permutation is precisely $[\tubr{21}](\pi)$ the number of occurrences of the vincular pattern $\tubr{21}$.

The upper bound of the Diaconis Graham inequality will play an important role in the coming sections, so we need to define the three statistics involved in this bound. The {\em depth} of a permutation, is a measure of disarray, defined as ${\rm dp}(\sigma) = \sum_{\sigma_i > i} (\sigma_i - i).$ The {\em length} of a permutation is $\ell_s(\sigma) = \sum_{i=1}^n |\{i < j \; | \; \sigma_i > \sigma_j\}|$. This is the number of inversions, which also equals $[21](\sigma)$. The {\em reflection length} of a permutation is $\ell_T(\sigma) = n - {\rm cyc}(\pi)$, where ${\rm cyc}(\pi)$ is the number of cycles in the cycle decomposition of $\pi$.

In \cite{BT22}, Berman and Tenner then develop the notion of \textit{arrow patterns}, which simultaneously contains some of the information from $\pi$ and some from $\hat{\pi}$. We reproduce their definition here because we will use it often,

\begin{definition}{\cite[Definition 4.1]{BT22}} \label{def: arrow}
    An \textit{arrow pattern} $\alpha$ in $\S_k$ consists of
    \begin{itemize}
        \item sets of integers $A = \{a_1,\dots,a_m\}$, $B = \{b_1,\dots,b_h\}$ and $C = \{c_1,\dots,c_h\}$ where $A \cup B \cup C = [k]$. 
        \item a string $\nu = a_1\ldots a_m$ where some elements can be marked. Similar to a vincular pattern, but it is possible that $A$ does not consist of consecutive integers, i.e. $A$ may  not be $[m]$.
        \item a (possibly empty) collection of $h$-arrows, $\{b_i \to c_i \; | \; i = 1,\dots,h\}$.
    \end{itemize}
\end{definition}

We often represent these arrow patterns by $\underset{\scriptscriptstyle b_1 \to c_1, \cdots,b_h \to c_h}\nu$, especially when $|B| = |C| = 1$ and we have an arrow, in which case arrow patterns look like $\underset{\scriptscriptstyle b_1 \to c_1}\nu$. Given a permutation $\pi \in \S_n$ with $\sigma = \theta^{-1}(\pi)$, we say that a sub-string of the one line notation for $\pi$, $\omega \coloneqq x_{a_1}\ldots x_{a_m} = \pi_{t_1}\ldots\pi_{t_m}$, is an occurrence of $\alpha$ if,
\begin{itemize}
    \item the sub-string $\omega$ is order isomorphic to $\nu$.
    \item if $a_j$ and $a_{j+1}$ are marked in $\nu$, then $x_{a_j}$ and $x_{a_{j+1}}$ are adjacent in $\pi$
    \item there exists $X = \{x_1 < \cdots < x_k\} \subset [n]$ such that $\{x_{a_1},\dots, x_{a_m}\} \subseteq X$ and for every arrow $b_i \to c_i$ we have $\sigma(x_{b_i}) = x_{c_i}$ for $1 \leq i \leq h$.
\end{itemize}

We will often start with a permutation $\pi \in \S_n$ and consider how many times $\hat\pi$ contains an arrow pattern $\alpha$. 
For example, consider the arrow pattern $\underset{\scriptscriptstyle 1 \to 4}{\tubr{23}1}$, and the permutation $\pi = 946573812$ with $\hat\pi = 639245781$. $\hat\pi$ contains multiple occurrences of $\tubr{23}1$, for example $392$ and $241$. In both cases, $39$ occur consecutively in $\hat\pi$, as does $24$, but among these two options, only $241$ is an occurrence of the arrow pattern because $\pi_1 = 9$ which is larger than $2$ and $4$. Whereas, $\pi_2 = 4 < 9$.

\section{Proof of Theorem \ref{thm: 321-final}} \label{sec: mainResult}

In this section, we will characterize cyclic 321 avoiding permutations in terms of vincular pattern avoidance in their image under the fundamental bijection. Our main result is the following,

\begin{theorem} \label{thm: 321-final}
    $\pi \in \C_n(321)$ if and only if $\hat\pi_1 = n$ and $\hat\pi$ avoids $\{\tubr{32}\, \tubr{41}, \tubr{14}\,\tubr{23}, \tubr{41}\,\tubr{32}, \tubr{23}\,\tubr{14}, \tubr{23} \, \tubr{1}\}$.
    We can rephrase this as,
    \[
    \C_n(321) = \theta^{-1}(S_{n,n}(\tubr{32}\, \tubr{41}, \tubr{14}\,\tubr{23}, \tubr{41}\,\tubr{32}, \tubr{23}\,\tubr{14}, \tubr{23} \, \tubr{1})).
    \]
\end{theorem}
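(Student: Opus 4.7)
The plan is to split the biconditional into two essentially independent ingredients. First, $\pi \in \C_n$ iff $\hat\pi_1 = n$, which I would read off directly from the standard cycle notation convention: cycles are ordered by increasing largest element, so $\hat\pi_1$ is the maximum of the first cycle, and this equals $n$ iff the cycle containing $n$ is the only cycle. The assumption $\hat\pi_1 = n$ in the theorem thus exactly says $\pi$ is cyclic, and the problem reduces to showing, for cyclic $\pi$, that $\pi$ avoids $321$ iff $\hat\pi$ avoids the five listed vincular patterns.

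For this core step, I would invoke the upper Diaconis--Graham bound: $\pi$ avoids $321$ iff $\ell_s(\pi) = \Dp(\pi)$. Since $\ell_s(\pi) - \Dp(\pi) \geq 0$ in general, the natural goal is an identity
\[
\ell_s(\pi) - \Dp(\pi) \;=\; \sum_{\alpha}\,[\alpha](\hat\pi),
\]
where $\alpha$ ranges over (arrow-pattern, and then under the cyclic hypothesis, vincular-pattern) representatives of the five patterns in the statement; the equivalence then drops out by non-negativity of each term. To set this up, I would use the fundamental-bijection identity $\pi(\hat\pi_i) = \hat\pi_{i+1}$, cyclically with $\hat\pi_{n+1} := \hat\pi_1 = n$, to translate inversions of $\pi$ and depth contributions into local statements about the one-line notation of $\hat\pi$, mirroring the Berman--Tenner strategy for shallow permutations.

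The main obstacle I expect is carrying out this translation tightly enough to land on exactly these five patterns with coefficient one. I would proceed by classifying the possible local configurations of two pairs of consecutive entries in $\hat\pi$, or of one such pair together with the forced leading $n$, and checking when such a configuration produces an inversion of $\pi$ not matched by a corresponding contribution to $\Dp(\pi)$. I expect the four length-four patterns $\tubr{32}\,\tubr{41},\ \tubr{14}\,\tubr{23},\ \tubr{41}\,\tubr{32},\ \tubr{23}\,\tubr{14}$ to arise from pure two-pair configurations (they come in reverse-complement related pairs, which should roughly halve the case analysis), while the length-three pattern $\tubr{23}\,\tubr{1}$ should come from the wraparound arrow $\hat\pi_n \to \hat\pi_1 = n$, with the unmarked $3$ playing the role of the leading $n$.

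I would sanity-check by producing, for each of the five patterns, a short cyclic $\pi$ whose $\hat\pi$ contains that pattern and for which $\ell_s(\pi) > \Dp(\pi)$ is witnessed by an explicit $321$ in $\pi$; this secures the ``contains a pattern $\Rightarrow$ contains $321$'' direction. The converse direction --- absence of all five patterns forces equality --- I would attempt either directly, by exhausting the finitely many local configurations in $\hat\pi$, or inductively on $n$ by removing the final entry of $\hat\pi$ (which corresponds to a controlled surgery on the single cycle of $\pi$) and verifying that avoidance of the five patterns is preserved under the reduction and forces $\ell_s = \Dp$ at the base step.
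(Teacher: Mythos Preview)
Your high-level strategy matches the paper's: reduce to the Diaconis--Graham upper-bound equality $\ell_S(\pi)=\Dp(\pi)$ and express $\ell_S-\Dp$ as a non-negative pattern function in $\hat\pi$. Where you diverge is in the execution. The paper does not re-derive the translation from scratch; it quotes \cite[Theorems~4.4 and 4.5]{BT22} to obtain, for cyclic $\pi$,
\[
\ell_S(\pi)-\Dp(\pi)=\bigl([2\,\tubr{31}] + [\tubr{14}\,\tubr{23}] + [\tubr{41}\,\tubr{32}] - [\tubr{24}\,\tubr{13}] - [\tubr{31}\,\tubr{42}]\bigr)(\hat\pi),
\]
which already has negative terms and a summand $[2\,\tubr{31}]$ that is \emph{not} a ``two pairs of consecutive entries'' configuration. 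The real work is then purely algebraic: the paper expands $[2\,\tubr{31}]$ according to the entry following the ``$2$'', and later expands $[\tubr{23}\,1]$ according to the entries flanking the ``$1$'', each time passing through arrow patterns that simplify under the cyclic hypothesis. After these two expansions all signed terms cancel and one is left with $[\tubr{32}\,\tubr{41}]+[\tubr{14}\,\tubr{23}]+[\tubr{41}\,\tubr{32}]+[\underset{1\to 4}{\tubr{23}\,1}]$; a final lemma rewrites the arrow pattern as $[\tubr{23}\,\tubr{14}]+[\tubr{23}\,\tubr{1}]$. Non-negativity of each summand then gives both directions simultaneously---no separate forward/converse argument is needed.

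By contrast, you leave exactly this cancellation open (you flag it as ``the main obstacle'') and propose two fallbacks. The direct classification of two-pair configurations is too narrow as stated, since the Berman--Tenner formula you would be starting from already contains the three-entry term $[2\,\tubr{31}]$; you would end up reproducing the paper's expansion anyway. The inductive fallback has a genuine gap: avoidance of the five patterns is \emph{not} preserved under deleting $\hat\pi_n$. For instance $\hat\pi=52413$ avoids all five patterns (and indeed $\theta^{-1}(\hat\pi)=34512\in\C_5(321)$), but deleting $\hat\pi_5$ leaves $5241$, which contains $\tubr{23}\,\tubr{1}$ via $2,4,1$. So the ``verifying that avoidance of the five patterns is preserved under the reduction'' step would fail, and the induction cannot proceed as written. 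Two minor corrections: the four length-$4$ patterns pair off under \emph{reversal} (corresponding to $\pi\mapsto\pi^{-1}$), which is the symmetry the paper actually uses later; and in $\tubr{23}\,\tubr{1}$ there is no ``unmarked $3$''---the pattern arises from the arrow pattern $\underset{1\to 4}{\tubr{23}\,1}$ in the case where the image of the ``$1$'' is the leading $n=\hat\pi_1$, forcing the ``$1$'' to sit in the last position.
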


Another characterization without requiring fixed terms at the start of the permutation which follows immediately from Theorem \ref{thm: 321-final} is,

\begin{corollary}\label{cor: 321-A}
    For $n \geq 2$,
\[
|\C_n(321)| = |\S_{n-1}(\tubr{32}\, \tubr{41}, \tubr{14}\,\tubr{23}, \tubr{41}\,\tubr{32}, \tubr{23}\,\tubr{14}, \tubr{23} \, \tubr{1},\tubr{1} \, \tubr{32})|.
\]
\end{corollary}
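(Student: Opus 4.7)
The plan is to convert the constraint $\hat\pi_1 = n$ appearing in Theorem~\ref{thm: 321-final} into an additional vincular pattern-avoidance condition, so that the enumeration takes place over all of $\S_{n-1}$ rather than over $S_{n,n}$. Consider the bijection $\phi\colon S_{n,n} \to \S_{n-1}$ given by $\phi(n\,\pi_2\cdots\pi_n) = \pi_2\cdots\pi_n$; since $\pi_1 = n$, the remaining entries are precisely a permutation of $[n-1]$, and $\phi^{-1}$ simply prepends the value $n$. Given Theorem~\ref{thm: 321-final}, the corollary reduces to showing that $\pi \in S_{n,n}$ avoids the five patterns in the statement if and only if $\phi(\pi) \in \S_{n-1}$ avoids those same five patterns together with $\tubr{1}\,\tubr{32}$.

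The first step is to handle the four patterns $\tubr{32}\,\tubr{41}$, $\tubr{14}\,\tubr{23}$, $\tubr{23}\,\tubr{14}$, and $\tubr{23}\,\tubr{1}$ at once. The ``if'' direction is clear: any occurrence inside $\phi(\pi)$ transfers back to $\pi = n\,\phi(\pi)$ since prepending a new maximum cannot destroy an occurrence that avoids position~$1$. For the converse, I would observe that in each of these four patterns the maximum value does \emph{not} occupy the leftmost slot of the pattern (the maximum sits at position $3$ in $3241$, position $2$ in $1423$, position $4$ in $2314$, and position $2$ in $231$). Because $\pi_1 = n$ is the global maximum of $\pi$, any occurrence involving $\pi_1$ would force $n$ to play the role of the pattern's maximum, which would in turn require some earlier entry of $\pi$ lying to the left of $\pi_1$---impossible. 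Hence every occurrence is confined to positions $2,\ldots,n$, i.e., lies inside $\phi(\pi)$.

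The only pattern whose maximum sits at position~$1$ is $\tubr{41}\,\tubr{32}$, and this is the step that requires the extra avoided pattern. If $\pi_1 = n$ participates in such an occurrence, it must play the role of ``$4$''; the $\tubr{41}$ adjacency forces $\pi_2$ to play ``$1$'', and some later adjacent pair $\pi_j\pi_{j+1}$ with $j \geq 3$ plays ``$32$'' and satisfies $\pi_2 < \pi_{j+1} < \pi_j$. Under $\phi$ this is precisely the statement that $\sigma := \phi(\pi)$ has $\sigma_1 < \sigma_j < \sigma_{j-1}$ with $\sigma_{j-1}\sigma_j$ adjacent in $\sigma$, i.e.\ an occurrence of $\tubr{1}\,\tubr{32}$ in $\sigma$; and the correspondence is reversible, since prepending $n$ to any such $\sigma$ creates an occurrence of $\tubr{41}\,\tubr{32}$ in $\pi$. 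The remaining occurrences of $\tubr{41}\,\tubr{32}$ in $\pi$ (those not using $\pi_1$) correspond to occurrences in $\phi(\pi)$ by the same position-shift argument as before. Combining these translations with Theorem~\ref{thm: 321-final} yields the stated equality. The main (and essentially only) obstacle is the short case analysis for $\tubr{41}\,\tubr{32}$, which hinges on the single observation that this is the unique pattern among the five whose maximum sits at its leftmost position.
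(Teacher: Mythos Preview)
Your proof is correct and takes essentially the same approach as the paper's own argument: both remove the leading $n$ via the obvious bijection $S_{n,n}\to\S_{n-1}$ and observe that the only occurrences lost in doing so are the $\tubr{41}\,\tubr{32}$ patterns using $\pi_1=n$, which become precisely the $\tubr{1}\,\tubr{32}$ patterns in $\phi(\pi)$. Your write-up simply spells out in more detail why the other four patterns cannot involve $\pi_1$ (because their maximum is not in the leftmost slot), whereas the paper compresses this into a single sentence.
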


\begin{proof}
    By Theorem \ref{thm: 321-final}, $\pi \in \C_n(321)$ if and only if $\hat\pi_2\cdots \hat\pi_n \in \S_{n-1}$ avoids 
    \[
    \tubr{32}\, \tubr{41}, \tubr{14}\,\tubr{23}, \tubr{41}\,\tubr{32}, \tubr{23}\,\tubr{14}, \tubr{23} \, \tubr{1},\tubr{1} \, \tubr{32}.
    \]
    Indeed, by removing $n$ at the start of $\hat\pi$, the only pattern counts we remove are occurrences of $[\tubr{41} \, \tubr{32}]$ that begin with $n$. These are counted instead by $[\tubr{1} \, \tubr{32}]$. The result follows.
\end{proof}

To prove these results, we begin by considering pattern functions that characterize $321$ avoidance in cycles via equality of the upper bound of the Diaconis-Graham inequalities.

\begin{lemma} \label{lem: 321-cyclic-first}
    $\pi \in \C_n(321)$ if and only if $\hat\pi_1 = n$ and:
    \[
    ([2 \; \tubr{31}] +  [\tubr{14} \, \tubr{23}] +  [\tubr{41} \, \tubr{32}] -  [\tubr{24} \; \tubr{13}] -  [\tubr{31} \; \tubr{42}])(\hat\pi) = 0.
    \]
Furthermore, this pattern count is equivalent to
\begin{equation} \label{eq: 321-count}
([\tubr{231}] + [\tubr{23} \, \tubr{41}] + [\tubr{24} \; \tubr{31}] + [\tubr{32} \, \tubr{41}] + [\tubr{14} \, \tubr{23}] +  [\tubr{41} \, \tubr{32}] -  [\tubr{24} \; \tubr{13}])(\hat\pi) = 0.
\end{equation}
\end{lemma}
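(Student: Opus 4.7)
The proof rests on two standard reductions. First, $\pi$ is cyclic iff its standard cycle notation consists of a single cycle, which, since that cycle is written with its largest element first, is equivalent to $\hat\pi_1 = n$. Second, the upper Diaconis--Graham inequality $\mathrm{dp}(\pi) \leq \ell_s(\pi)$ is tight precisely on $321$-avoiders. Thus the first claim reduces to producing a pattern-function identity
\begin{equation*}
\ell_s(\pi) - \mathrm{dp}(\pi) = \bigl([2\,\tubr{31}] + [\tubr{14}\,\tubr{23}] + [\tubr{41}\,\tubr{32}] - [\tubr{24}\,\tubr{13}] - [\tubr{31}\,\tubr{42}]\bigr)(\hat\pi),
\end{equation*}
valid for every $\pi \in \S_n$. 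Since the left-hand side is a non-negative integer vanishing exactly on $321$-avoiders, once this identity is in hand the first characterization follows immediately by combining it with the cyclicity criterion.

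To establish the identity I would adapt the Berman--Tenner pattern-function framework of \cite{BT22}. Writing $\mathrm{dp}(\pi) = \#\{(i,j) : i < j \leq \pi(i)\}$ and $\ell_s(\pi) = \#\{(i,j) : i < j,\, \pi(i) > \pi(j)\}$, one translates each such pair into its image in $\hat\pi$, using the dictionary that adjacency in $\hat\pi$ corresponds to cycle successorship in $\pi$ (except at the left-to-right maxima of $\hat\pi$, which mark cycle boundaries). A case analysis on the relative positions and values of the two indices then expresses each contribution as a signed pattern count on $\hat\pi$, and the signed contributions are expected to group into exactly the five vincular patterns in the statement. This is the main obstacle: the bookkeeping is delicate and the argument mirrors the analogous derivation in \cite{BT22} for shallow permutations, suitably modified for the upper rather than the lower Diaconis--Graham bound.

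For the second part of the lemma, after cancelling the shared terms $[\tubr{14}\,\tubr{23}] + [\tubr{41}\,\tubr{32}] - [\tubr{24}\,\tubr{13}]$ from both expressions, the claimed equivalence reduces to
\begin{equation*}
[2\,\tubr{31}](\hat\pi) = [\tubr{231}](\hat\pi) + [\tubr{31}\,\tubr{42}](\hat\pi) + [\tubr{32}\,\tubr{41}](\hat\pi) + [\tubr{23}\,\tubr{41}](\hat\pi) + [\tubr{24}\,\tubr{31}](\hat\pi),
\end{equation*}
which I would prove by a direct bijection. An occurrence of $2\,\tubr{31}$ in $\hat\pi$ is a triple $\hat\pi_c \hat\pi_a \hat\pi_{a+1}$ with $c < a$ and $\hat\pi_{a+1} < \hat\pi_c < \hat\pi_a$. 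If $c = a-1$ the three positions are consecutive, contributing to $[\tubr{231}]$. Otherwise $c+1 < a$, and the value of $\hat\pi_{c+1}$ lies in exactly one of the four open intervals $(-\infty, \hat\pi_{a+1})$, $(\hat\pi_{a+1}, \hat\pi_c)$, $(\hat\pi_c, \hat\pi_a)$, $(\hat\pi_a, \infty)$, producing the four patterns $\tubr{31}\,\tubr{42}$, $\tubr{32}\,\tubr{41}$, $\tubr{23}\,\tubr{41}$, $\tubr{24}\,\tubr{31}$ respectively; conversely, every occurrence of one of these four-patterns at positions $(a, a+1, b, b+1)$ maps back to the triple $(\hat\pi_a, \hat\pi_b, \hat\pi_{b+1})$, which is an occurrence of $2\,\tubr{31}$. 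Summing the resulting bijections gives the identity on the nose.
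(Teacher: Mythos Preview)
Your overall strategy is correct, but the two halves compare to the paper in opposite ways.

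For the first characterization you are working harder than necessary. The identity
\[
\ell_s(\pi)-\Dp(\pi) \;=\; \bigl([2\,\tubr{31}] + [\tubr{14}\,\tubr{23}] + [\tubr{41}\,\tubr{32}] - [\tubr{24}\,\tubr{13}] - [\tubr{31}\,\tubr{42}]\bigr)(\hat\pi)
\]
does not need to be rederived from scratch: the paper simply subtracts the two formulas of \cite[Theorems~4.4 and~4.5]{BT22}, which already express $\Dp(\pi)-\ell_T(\pi)$ and $\ell_s(\pi)-\ell_T(\pi)$ as pattern functions on $\hat\pi$; the $\ell_T(\pi)$ terms cancel. In particular you are right that the resulting identity holds for every $\pi\in\S_n$, not only cyclic ones, so the ``main obstacle'' you flag is already handled in \cite{BT22} and can be quoted rather than reproved.

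For the second equivalence your argument is genuinely different from the paper's and in fact cleaner. The paper expands $[2\,\tubr{31}]$ via \emph{arrow} patterns, obtaining several arrow terms which are then simplified one by one using the hypothesis that $\pi$ is cyclic (for instance, the term $[\underset{\scriptscriptstyle 2\to 2}{2\,\tubr{31}}]$ vanishes because a nontrivial cycle has no fixed point, and $[\underset{\scriptscriptstyle 2\to 3}{2\,\tubr{41}}]$ becomes $[\tubr{23}\,\tubr{41}]$ because the image of the ``$2$'' must sit immediately to its right in $\hat\pi$). Your direct case split on the value $\hat\pi_{c+1}$ instead proves the purely vincular identity
\[
[2\,\tubr{31}] \;=\; [\tubr{231}] + [\tubr{31}\,\tubr{42}] + [\tubr{32}\,\tubr{41}] + [\tubr{23}\,\tubr{41}] + [\tubr{24}\,\tubr{31}]
\]
bijectively for \emph{every} permutation, with no arrow patterns and no appeal to cyclicity; the paper's version is the specialization of this to $\hat\pi$ with $\hat\pi_1=n$. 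The inverse of your bijection is well-defined because in each length-four case the first bracketed pair sits at positions $(c,c+1)$, so $c$ (and hence the underlying $2\,\tubr{31}$ triple $(c,a,a+1)$) is recovered uniquely.
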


\begin{proof}
    It is well known that a permutation avoids $321$ if and only if $\Dp(\pi) = \ell_S(\pi)$. When $\pi$ is cyclic, $\ell_T(\pi) = n-1$ so by \cite[Theorem 4.4]{BT22},
    \[
    \Dp(\pi) = n-1 + ([2 \; \tubr{31}] + [\tubr{14} \, \tubr{23}] + [\tubr{41} \, \tubr{32}] + [\tubr{24} \; \tubr{13}] + [\tubr{31} \; \tubr{42}])(\hat\pi)
    \]
    and by \cite[Theorem 4.5]{BT22},
    \[
    \ell_S(\pi) = n-1 + 2([2 \; \tubr{31}] + [\tubr{14} \, \tubr{23}] + [\tubr{41} \, \tubr{32}])(\hat\pi)
    \]
    so we have equality precisely when
    \[
    ([2 \; \tubr{31}] +  [\tubr{14} \, \tubr{23}] +  [\tubr{41} \, \tubr{32}] -  [\tubr{24} \; \tubr{13}] -  [\tubr{31} \; \tubr{42}])(\hat\pi) = 0.
    \]
    We implicitly used the cyclic assumption, so we do also need to require that $\hat\pi_1 = n$. This establishes the first claim. The second comes from expanding $[2\; \tubr{31}]$. Consider what element comes after the $2$, doing so we get the equality
    \[
    [2 \; \tubr{31}] = [\tubr{32} \, \tubr{41}] + [\tubr{31} \; \tubr{42}] + [\underset{\scriptscriptstyle 2 \to 2}{2 \; \tubr{31}}] + [\underset{\scriptscriptstyle 2 \to 3}{2 \, \tubr{41}}] + [\underset{\scriptscriptstyle 2 \to 4}{2 \; \tubr{31}}] + [\underset{\scriptscriptstyle 2 \to 3}{\tubr{231}}].
    \]
    Where the last four counts are of arrow patterns (Definition \ref{def: arrow}). We can make simplifications here because we are only counting these patterns in cyclic permutations. This means that $[\underset{\scriptscriptstyle 2 \to 2}{2 \; \tubr{31}}]$ will never occur. $[\underset{\scriptscriptstyle 2 \to 3}{2 \, \tubr{41}}] = [\tubr{23} \, \tubr{41}]$ because the $3$ in the image of $2$ could not be the largest element in its cycle since $n$ is, and $41$ must come after $2$. Similarly, $[\underset{\scriptscriptstyle 2 \to 4}{2 \; \tubr{31}}] = [\tubr{24} \; \tubr{31}]$, and $[\underset{\scriptscriptstyle 2 \to 3}{\tubr{231}}]= [\tubr{231}]$. This is also forced because $3$ could not be the start of a new cycle because there is only one cycle. This means we have the following equality of pattern functions for cyclic permutations,
    \[
    [2 \; \tubr{31}] = [\tubr{32} \, \tubr{41}] + [\tubr{31} \; \tubr{42}] + [\tubr{23} \, \tubr{41}] + [\tubr{24} \; \tubr{31}] + [\tubr{231}].
    \]
    
    Substituting this in for $[2 \; \tubr{31}]$ in our first pattern function and simplifying we find,
    \[
    ([\tubr{231}] + [\tubr{23} \, \tubr{41}] + [\tubr{24} \; \tubr{31}] + [\tubr{32} \, \tubr{41}] + [\tubr{14} \, \tubr{23}] +  [\tubr{41} \, \tubr{32}] -  [\tubr{24} \; \tubr{13}])(\hat\pi) = 0.
    \]
\end{proof}

So we are almost able to describe $321$ avoiding cyclic permutations in terms of vincular avoidance. We just need to remove the subtraction in the pattern function. Using arrow patterns, we can characterize it in terms of avoidance in the next result,

\begin{lemma}  \label{lem: 321-cyclic}
    $\pi \in \C_n(321)$ if and only if $\hat\pi_1 = n$ and
    \[
    ([\tubr{32} \, \tubr{41}] + [\tubr{14} \, \tubr{23}] + [\tubr{41} \, \tubr{32}] + [\tubr{23} \, 1]_{1 \to 4})(\hat\pi) = 0.
    \]
\end{lemma}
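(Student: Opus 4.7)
The plan is to reduce the claim to Lemma~\ref{lem: 321-cyclic-first} by establishing, for cyclic $\pi$ with $\hat\pi_1 = n$, the pattern identity
\[
[\tubr{231}] + [\tubr{23}\,\tubr{41}] + [\tubr{24}\,\tubr{31}] - [\tubr{24}\,\tubr{13}] = [\tubr{23}\,1]_{1 \to 4}.
\]
I would derive this identity by writing the bare vincular count $[\tubr{23}\,1]$ in two ways---probing the entry just before the ``$1$'' and the entry just after---and then matching the results. The cyclic hypothesis will enter only when converting the second decomposition into the arrow-pattern count.

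For the first decomposition, classify each occurrence $(i,j)$ of $\tubr{23}\,1$ (so $i+1 < j \le n$ and $\hat\pi_j < \hat\pi_i < \hat\pi_{i+1}$) by the entry $\hat\pi_{j-1}$. When $j = i+2$ the probe coincides with $\hat\pi_{i+1}$ and the triple is already an instance of $\tubr{231}$. When $j > i+2$ the probe is a genuine fourth entry, and the four possible positions of $\hat\pi_{j-1}$ relative to $\hat\pi_j < \hat\pi_i < \hat\pi_{i+1}$ produce the four-element patterns $\tubr{23}\,\tubr{41}, \tubr{24}\,\tubr{31}, \tubr{34}\,\tubr{21}, \tubr{34}\,\tubr{12}$ at positions $(i, i+1, j-1, j)$. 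This yields
\[
[\tubr{23}\,1] = [\tubr{231}] + [\tubr{23}\,\tubr{41}] + [\tubr{24}\,\tubr{31}] + [\tubr{34}\,\tubr{21}] + [\tubr{34}\,\tubr{12}].
\]

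For the second decomposition, probe $\hat\pi_{j+1}$ when $j < n$: its four possible ranks among $\{\hat\pi_j, \hat\pi_i, \hat\pi_{i+1}, \hat\pi_{j+1}\}$ give occurrences of $\tubr{23}\,\tubr{14}, \tubr{24}\,\tubr{13}, \tubr{34}\,\tubr{12}, \tubr{34}\,\tubr{21}$ at positions $(i, i+1, j, j+1)$; when $j = n$ the occurrence has its ``$1$'' pinned to the final position, so it is an instance of the end-marked vincular pattern $\tubr{23}\,\tubr{1}$. This gives
\[
[\tubr{23}\,1] = [\tubr{23}\,\tubr{14}] + [\tubr{24}\,\tubr{13}] + [\tubr{34}\,\tubr{12}] + [\tubr{34}\,\tubr{21}] + [\tubr{23}\,\tubr{1}].
\]
At this point the cyclic hypothesis enters: since $\hat\pi_1 = n$ and $\pi$ is a single cycle, $\pi(\hat\pi_j) = \hat\pi_{j+1}$ for $j < n$ and $\pi(\hat\pi_n) = n$, so the arrow condition $\pi(\hat\pi_j) > \hat\pi_{i+1}$ reduces to $\hat\pi_{j+1} > \hat\pi_{i+1}$ when $j < n$ and is automatic when $j = n$ (the image is $n$ and $\hat\pi_{i+1} \ne n$ because $i+1 \ge 2$). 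These are precisely the first and last slots of the second decomposition, so $[\tubr{23}\,1]_{1 \to 4} = [\tubr{23}\,\tubr{14}] + [\tubr{23}\,\tubr{1}]$.

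Equating the two decompositions, the common terms $[\tubr{34}\,\tubr{12}] + [\tubr{34}\,\tubr{21}]$ cancel, and rearranging gives the target identity; substituting into the second formula of Lemma~\ref{lem: 321-cyclic-first} then yields the stated characterization. The main obstacle is verifying that each decomposition genuinely partitions the $\tubr{23}\,1$-occurrences across the boundary cases ($j = i+2$ on one side, $j = n$ on the other), and recognizing that the cyclic hypothesis aligns the arrow condition precisely with the ``large-probe'' and ``end'' slots in the second decomposition so that the subtraction $-[\tubr{24}\,\tubr{13}]$ emerges naturally.
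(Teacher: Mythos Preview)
Your proposal is correct and follows essentially the same approach as the paper: both arguments expand $[\tubr{23}\,1]$ twice---once by probing the entry just before the ``$1$'' and once by probing the entry just after (equivalently, the image under $\pi$)---equate the two expansions, cancel the common $[\tubr{34}\,\tubr{12}]+[\tubr{34}\,\tubr{21}]$ terms, and substitute into the second display of Lemma~\ref{lem: 321-cyclic-first}. The only cosmetic difference is that the paper records the ``after'' expansion directly with the arrow term $[\underset{\scriptscriptstyle 1\to 4}{\tubr{23}\,1}]$, whereas you first write it as $[\tubr{23}\,\tubr{14}]+[\tubr{23}\,\tubr{1}]$ and then identify this sum with the arrow count, thereby absorbing the content of Lemma~\ref{lem:231-arrow} into the same proof.
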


\begin{proof}
    We begin with Lemma \ref{lem: 321-cyclic-first}, which tells us that $\pi \in \C_n(321)$ if and only if $\hat\pi_1 = n$ and 
    \[
    ([\tubr{231}] + [\tubr{23} \, \tubr{41}] + [\tubr{24} \; \tubr{31}] + [\tubr{32} \, \tubr{41}] + [\tubr{14} \, \tubr{23}] +  [\tubr{41} \, \tubr{32}] -  [\tubr{24} \; \tubr{13}])(\hat\pi) = 0.
    \]
    Our goal is to express this as a sum of pattern counts. To do this, consider the terms that can come before, and after, the $1$ in the pattern count $[\tubr{23} \, 1]$. Doing this we find
    \[
    [\tubr{23} \, 1] = [\tubr{34} \; \tubr{12}] + [\tubr{34} \; \tubr{21}] + [\tubr{24} \; \tubr{13}] + [\underset{\scriptscriptstyle 1 \to 4}{\tubr{23} \, 1}] = [\tubr{34} \; \tubr{12}] + [\tubr{34} \; \tubr{21}] + [\tubr{24} \; \tubr{31}] + [\tubr{23} \, \tubr{41}] + [\tubr{231}].
    \]
    Canceling like terms and solving for $[\tubr{24} \; \tubr{13}]$,
    \[
    [\tubr{24} \; \tubr{13}] = [\tubr{24} \; \tubr{31}] + [\tubr{23} \, \tubr{41}] + [\tubr{231}] - [\underset{\scriptscriptstyle 1 \to 4}{\tubr{23} \, 1}].
    \]
    Substituting this in to the pattern function from Lemma \ref{lem: 321-cyclic-first} and simplifying we find that $\pi \in \C_n(321)$ if and only if $\hat\pi_1 = n$ and
    \[
    ([\tubr{32} \, \tubr{41}] + [\tubr{14} \, \tubr{23}] + [\tubr{41} \, \tubr{32}] + [\tubr{23} \, 1]_{1 \to 4})(\hat\pi) = 0.
    \]
    Since this is now a positive sum, we conclude that a permutation is cyclic and avoids $321$ if and only if $\hat\pi_1 = n$ and $\hat\pi$ avoids $\tubr{32} \, \tubr{41}, \tubr{14} \, \tubr{23}, \tubr{41} \, \tubr{32}$ and $\underset{\scriptscriptstyle 1 \to 4}{\tubr{23} \, 1}$.
\end{proof}

This completely characterizes cyclic $321$ avoiding permutations in terms of vincular and arrow pattern avoidance. In particular, we can ignore the cycle decomposition of the permutation and just consider avoidance. We next characterize this arrow pattern avoidance in cycles in terms of normal avoidance.

\begin{lemma} \label{lem:231-arrow}
    A cyclic permutation $\pi \in \C_n(321)$ has
    \[
    [\underset{\scriptscriptstyle 1 \to 4}{\tubr{23} \, 1}](\hat\pi) = ([\tubr{23}\, \tubr{14}] + [\tubr{23}\, \tubr{1}])(\hat\pi),
    \]
    where recall that $\tubr{1}$ signifies that the $1$ must be the final entry in $\hat\pi$
\end{lemma}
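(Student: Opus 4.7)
The plan is to replace the arrow condition by a condition purely on $\hat\pi$, using the very explicit description of $\pi$ that comes from being cyclic. Since $\pi \in \C_n$ has standard cycle notation $(\hat\pi_1, \hat\pi_2, \dots, \hat\pi_n)$ with $\hat\pi_1 = n$, we have $\pi(\hat\pi_i) = \hat\pi_{i+1}$ for $1 \le i < n$ and $\pi(\hat\pi_n) = n$. This turns the arrow constraint into an inequality between two entries of $\hat\pi$, which is exactly what we need to bridge to ordinary vincular counts.

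Unpacking definitions, an occurrence of $\underset{\scriptscriptstyle 1 \to 4}{\tubr{23}\,1}$ in $\hat\pi$ is a pair of positions with $t_3 > t_1 + 1$ such that $\hat\pi_{t_3} < \hat\pi_{t_1} < \hat\pi_{t_1+1}$ and $\pi(\hat\pi_{t_3}) > \hat\pi_{t_1+1}$; here the last inequality is the content of the arrow, asserting the existence of some $x_4 > x_3 = \hat\pi_{t_1+1}$ with $\pi(x_1) = x_4$, where $x_1 = \hat\pi_{t_3}$. I will split into the two cases $t_3 = n$ and $t_3 < n$ and match each to one of the two vincular patterns on the right-hand side.

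When $t_3 = n$ we have $\pi(\hat\pi_n) = n$; since $\hat\pi_1 = n$ is the only position carrying the value $n$ and $t_1 + 1 \ge 2$, the entry $\hat\pi_{t_1+1}$ is strictly less than $n$, so the arrow condition is automatic. These occurrences are precisely the occurrences of $\tubr{23}\,\tubr{1}$. When $t_3 < n$ the arrow condition reads $\hat\pi_{t_3+1} > \hat\pi_{t_1+1}$; combined with $\hat\pi_{t_3} < \hat\pi_{t_1} < \hat\pi_{t_1+1}$ this forces $\hat\pi_{t_1}\hat\pi_{t_1+1}\hat\pi_{t_3}\hat\pi_{t_3+1}$ to be order-isomorphic to $2314$ with the required adjacencies at $\{t_1, t_1+1\}$ and $\{t_3, t_3+1\}$, which is exactly an occurrence of $\tubr{23}\,\tubr{14}$. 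Both correspondences are clearly reversible, and the two cases are disjoint (distinguished by whether $t_3 = n$), so together they capture every arrow occurrence exactly once, yielding the claimed identity.

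I do not anticipate a real obstacle; the argument is a careful bookkeeping of the cycle-to-one-line translation. The one thing worth flagging is that the proof only uses cyclicity of $\pi$ and never the $321$-avoidance hypothesis: the latter is stated in the lemma simply because this is the context in which the identity is applied in the next step of the argument.
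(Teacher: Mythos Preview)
Your argument is correct and follows essentially the same route as the paper's proof: both use that for cyclic $\pi$ one has $\pi(\hat\pi_i)=\hat\pi_{i+1}$ (with $\pi(\hat\pi_n)=n$), and split the arrow occurrences according to whether the ``$1$'' sits in the last position of $\hat\pi$ or not, matching the two cases to $\tubr{23}\,\tubr{1}$ and $\tubr{23}\,\tubr{14}$ respectively. Your observation that only cyclicity, and not $321$-avoidance, is used is also correct; the hypothesis $\pi\in\C_n(321)$ is there only because that is the setting in which the identity is applied.
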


\begin{proof}
    In a cyclic permutation, $\hat\pi_1 = n$. To have an occurrence of $\underset{\scriptscriptstyle 1 \to 4}{\tubr{23} \, 1}$ we must have $\tubr{xy} \; z$ in $\hat\pi$ with $z < x < y < \pi(z)$. Since $\pi$ is cyclic, the only way $\pi(z)$ appears before $z$ in $\hat\pi$ is if it is the first entry and $z$ is the final entry of $\hat\pi$, so $\tubr{xy} \; z$ is a $\tubr{23}\, \tubr{1}$ pattern. The other option is that $\pi(z)$ appears after $z$ in which case $\tubr{xy} \; \tubr{z \pi(z)}$ forms a $\tubr{23}\, \tubr{14}$ pattern in $\hat\pi$.
\end{proof}

The following finalized form of our main theorem follows,

\begin{proof}[Proof of Theorem \ref{thm: 321-final}]
    Substitute the pattern function for $[\underset{\scriptscriptstyle 1 \to 4}{\tubr{23} \, 1}]$ from Lemma \ref{lem:231-arrow} into Lemma \ref{lem: 321-cyclic}, the result follows immediately.
\end{proof}

So understanding cyclic permutations avoiding $321$ is equivalent to understanding permutations who avoid five vincular patterns and begin with $n$. Or equivalently by Corollary \ref{cor: 321-A}, understanding permutations who avoid six vincular patterns.

From Corollary \ref{cor: 321-A}, we get an immediate, but known, corollary by strengthening the avoidance conditions,

\begin{corollary} \label{cor: 132-231}
    For $n \geq 2$, let $c_n = |\C_n(321)|$
    \[
    c_n \geq |\S_{n-1}(132,231)| = 2^{n-2}
    \]
\end{corollary}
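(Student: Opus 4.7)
The plan is to apply Corollary \ref{cor: 321-A} and exhibit an injection (in fact an inclusion of sets) from $\S_{n-1}(132,231)$ into the set whose cardinality equals $c_n$. Concretely, I will argue that every permutation in $\S_{n-1}$ that avoids the two classical patterns $132$ and $231$ automatically avoids all six vincular patterns $\tubr{32}\,\tubr{41},\, \tubr{14}\,\tubr{23},\, \tubr{41}\,\tubr{32},\, \tubr{23}\,\tubr{14},\, \tubr{23}\,\tubr{1},\, \tubr{1}\,\tubr{32}$ appearing in Corollary \ref{cor: 321-A}. Since vincular containment requires both an order-isomorphic occurrence and extra adjacency/position conditions, it suffices to verify that the underlying classical pattern of each of the six vincular patterns contains at least one of $132$ or $231$. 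Then any permutation avoiding the classical patterns $132, 231$ avoids each underlying classical pattern, hence avoids each vincular pattern, giving the desired inclusion
\[
\S_{n-1}(132,231)\ \subseteq\ \S_{n-1}(\tubr{32}\,\tubr{41},\, \tubr{14}\,\tubr{23},\, \tubr{41}\,\tubr{32},\, \tubr{23}\,\tubr{14},\, \tubr{23}\,\tubr{1},\, \tubr{1}\,\tubr{32}).
\]

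The verification is a short case check on the six underlying classical patterns. The two patterns $\tubr{23}\,\tubr{1}$ and $\tubr{1}\,\tubr{32}$ are, as classical patterns, exactly $231$ and $132$, so nothing is needed there. For the remaining four length-$4$ patterns, I would simply list a length-$3$ subpattern containing either $231$ or $132$: in $3241$ the entries in positions $1,2,4$ form $231$; in $2314$ the entries in positions $1,2,3$ form $231$; in $1423$ the entries in positions $1,2,3$ form $132$; in $4132$ the entries in positions $2,3,4$ form $132$. Thus each of the six vincular patterns is avoided whenever both $132$ and $231$ are avoided as classical patterns.

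Finally, combining the inclusion with Corollary \ref{cor: 321-A} yields $c_n \geq |\S_{n-1}(132,231)|$, and the equality $|\S_{n-1}(132,231)| = 2^{n-2}$ is a standard fact about pairs of patterns of length three (the class consists precisely of permutations whose one-line notation is obtained by a sequence of left/right insertions, yielding $2^{n-2}$ permutations for $n \geq 2$). There is no real obstacle here; the only thing to be careful about is orientation, namely that avoidance of classical patterns is at least as strong as avoidance of the corresponding vincular patterns, rather than the reverse.
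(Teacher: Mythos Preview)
Your proposal is correct and follows exactly the paper's approach: the paper's proof is the single observation that $\S_{n-1}(132,231)$ is contained in the vincular avoidance class from Corollary~\ref{cor: 321-A}, and you have simply spelled out the easy verification of that inclusion. One small slip to fix: in $3241$ the entries in positions $1,2,4$ are $3,2,1$, which is a $321$ pattern, not $231$; use positions $2,3,4$ (giving $2,4,1$) or $1,3,4$ (giving $3,4,1$) instead.
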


\begin{proof}
    \sloppy This follows immediately from Corollary \ref{cor: 321-A} because $\S_{n-1}(132,231) \subset \S_{n-1}(\tubr{32}\, \tubr{41}, \tubr{14}\,\tubr{23}, \tubr{41}\,\tubr{32}, \tubr{23}\,\tubr{14}, \tubr{23} \, \tubr{1},\tubr{1} \, \tubr{32})$.
\end{proof}

This is already known from \cite[Theorem 5.3]{BC19} where they show $c_n \geq 2 c_{n-1}$. But this is a large strengthening of the avoidance conditions. It is possible one could strengthen, or weaken, them in lesser ways to get lower, or upper, bounds on the growth rate. We mention this again in the further works section. Our results rely on the cyclic assumption, since we need $\ell_T(\pi) = n-1$, and we often use that $\hat\pi_1 = n$, but it would be interesting to describe avoidance in $\pi$ in terms of avoidance in $\hat\pi$. We conjecture,



    

\begin{conjecture}
    If $\pi \in S_n(321)$ then $\hat\pi$ avoids $\{\tubr{32} \, \tubr{41}, \tubr{23}\, \tubr{14}, \tubr{41} \, \tubr{32}, \tubr{23}\, \tubr{1}\}$.
\end{conjecture}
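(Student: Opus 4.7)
The plan is to argue the contrapositive: assuming $\hat\pi$ contains one of the four vincular patterns, we exhibit a $321$ pattern in $\pi$. For each of the four patterns, the argument splits into two stages. First, one translates the adjacency data of the occurrence in $\hat\pi$ into information about $\pi$ via the fundamental bijection. Second, one applies a pigeonhole count in the one-line notation of $\pi$ to produce the third entry of a $321$.

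For stage one, the relevant dictionary between $\hat\pi$ and $\pi$ is as follows: (i) if $x,y$ are adjacent in $\hat\pi$ with $x>y$, then $\pi(x)=y$, since a cycle boundary in standard cycle notation must be an ascent (the next cycle begins with its maximum, which exceeds everything previously written); (ii) if $x,y$ are adjacent in $\hat\pi$ with $x<y$, then either $\pi(x)=y$, or else $y$ begins a new cycle and $x$ is the last entry of its cycle with $\pi(x)=M_x$ equal to the maximum of $x$'s cycle, so in particular $\pi(x)\ge x$; (iii) if $z$ is the last entry of $\hat\pi$, then $z$ is the last element of the last cycle, which is headed by the global maximum $n$, so $\pi(z)=n$. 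For each of the four patterns these rules produce two positions $p_1<p_2$ of $\pi$ with $\pi(p_1)>\pi(p_2)$, along with one of two auxiliary inequalities: for the descent--descent patterns $\tubr{32}\tubr{41}$ and $\tubr{41}\tubr{32}$, item (i) alone gives $\pi(p_1)<p_1$; for $\tubr{23}\tubr{14}$ and $\tubr{23}\tubr{1}$, taking $p_1$ to be the position of the bottom entry of the pattern and $p_2$ the position of the left member of the first adjacent pair, items (ii) and (iii) give $\pi(p_2)\ge p_2$, and the strict inequality $\pi(p_1)>\pi(p_2)$ follows from a short case check on cycle positions.

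Stage two is a pigeonhole. In the first regime, where $\pi(p_1)<p_1$, the $p_1$ positions in $\{1,\ldots,p_1\}$ cannot all carry values in $\{1,\ldots,\pi(p_1)\}$, so some $p_3<p_1$ has $\pi(p_3)>\pi(p_1)>\pi(p_2)$, and the triple $(p_3,p_1,p_2)$ realizes a $321$. In the second regime, where $\pi(p_2)\ge p_2$, the $n-p_2$ positions in $\{p_2+1,\ldots,n\}$ cannot all carry values in $\{\pi(p_2)+1,\ldots,n\}\setminus\{\pi(p_1)\}$, a set of size $n-\pi(p_2)-1$ (using that the value $\pi(p_1)>\pi(p_2)$ is already placed at the position $p_1<p_2$); hence some $p_3>p_2$ has $\pi(p_3)<\pi(p_2)$, and the triple $(p_1,p_2,p_3)$ realizes a $321$.

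The main obstacle I expect is the bookkeeping in stage one for the ascent patterns $\tubr{23}\tubr{14}$ and $\tubr{23}\tubr{1}$, where each adjacent ascent in $\hat\pi$ independently may or may not cross a cycle boundary. The strict inequality $\pi(p_1)>\pi(p_2)$ must be verified in every sub-case; the delicate ones are those where the left member of an ascent ends its cycle so that $\pi$ of that entry is only a cycle maximum, and one needs to invoke the ordering of maxima across cycles in standard cycle notation to conclude that this maximum still strictly exceeds $\pi(p_2)$. Once that is settled, the pigeonhole in stage two is routine.
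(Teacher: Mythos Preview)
Your plan is sound, and it actually settles what the paper leaves as an open conjecture: the paper gives no proof of this statement at all, so there is nothing to compare your approach against. You have proved it outright by an elementary contrapositive argument, whereas the paper's machinery (pattern functions and the Diaconis--Graham equality) is only deployed in the cyclic case and does not extend to general $\pi\in\S_n(321)$.

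The case check you flag as the main obstacle does go through cleanly. For $\tubr{23}\,\tubr{14}$ with values $c<a<b<d$ at positions $i,i{+}1,j,j{+}1$, taking $p_1=c$ and $p_2=a$, the four sub-cases all yield $\pi(c)>\pi(a)$: if $\pi(c)=d$ this is immediate since $d>b\ge\pi(a)$; if $\pi(c)=M_c$ is the maximum of $c$'s cycle, then since $j>i+1$ the cycle of $c$ is no earlier than that of $b$, so $M_c$ is at least the maximum of $b$'s cycle, and in each sub-case for $\pi(a)$ one checks this strictly exceeds $\pi(a)$ (using that $b$ is not its own cycle-head when $a,b$ share a cycle, and that $M_a<b$ when $b$ heads a new cycle). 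For $\tubr{23}\,\tubr{1}$ with values $c<a<b$, rule (iii) gives $\pi(c)=n$, while $\pi(a)<n$ in both sub-cases: if $\pi(a)=b$ then $b\neq n$ (else $b$ heads the last cycle, forcing $a$ into an earlier cycle and contradicting $\pi(a)=b$), and if $\pi(a)=M_a$ then $a$'s cycle is not the last, so $M_a<n$. The stage-two pigeonhole is then exactly as you describe.
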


 This is not an if and only if, but it would be very interesting to find an exact set of vincular patterns that describes $321$ avoidance. In the coming sections, we explore some consequences of Theorem \ref{thm: 321-final}, mentioning many questions along the way.





\section{Convergence of Growth Rate} \label{sec: convergence}

Theorem \ref{thm: 321-final} allows us to define a semigroup structure on $\bigcup_{n \geq 1} \C_n(321)$. Given $\sigma \in \C_n(321)$ and $\tau \in \C_m(321)$, let $\hat\tau_k = 1$ and define a new binary operation $\odot$ as follows,
\[
\hat\tau \odot \hat\sigma = (\hat\tau_1 + n) \cdots (\hat\tau_k + n) \hat\sigma_1 \hat\sigma_2 \cdots \hat\sigma_n (\hat\tau_{k+1} + n) \cdots (\hat\tau_m + n).
\]

\begin{lemma} \label{lem: monoidStructure}
    Given $\sigma \in \C_n(321)$ and  $\tau \in \C_m(321)$, $\theta^{-1}( \hat\tau \odot \hat\sigma) \in \C_{n+m}(321)$.
\end{lemma}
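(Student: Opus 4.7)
The plan is to verify the hypotheses of Theorem~\ref{thm: 321-final} for $\hat\pi \coloneqq \hat\tau \odot \hat\sigma$, from which $\theta^{-1}(\hat\pi) \in \C_{n+m}(321)$ will follow. Applying Theorem~\ref{thm: 321-final} to the given $\sigma$ and $\tau$ gives us that $\hat\sigma_1 = n$, $\hat\tau_1 = m$, and both $\hat\sigma$ and $\hat\tau$ avoid the five listed vincular patterns; these are the facts that I will feed back in. The first hypothesis, $\hat\pi_1 = n+m$, is immediate since $\hat\pi_1 = \hat\tau_1 + n = m+n$. Because $n+m$ is the maximum value of $\hat\pi$, it is the unique left-to-right maximum, so $\theta^{-1}(\hat\pi)$ is a single cycle of length $n+m$.

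The substantive task is to prove $\hat\pi$ avoids the five vincular patterns. The key structural fact is that $\hat\pi$ decomposes into three blocks: positions $1,\dots,k$ and $k+n+1,\dots,n+m$ carry ``big'' values (all in $\{n+1,\dots,n+m\}$, and in the same relative order as $\hat\tau$), whereas positions $k+1,\dots,k+n$ carry the ``small'' values of $\hat\sigma$ (all in $[n]$, in the same relative order as $\hat\sigma$). The left junction $(k,k+1)$ has values $(n+1,n)$, a descent of one, and the right junction $(k+n,k+n+1)$ has values $(\hat\sigma_n, \hat\tau_{k+1}+n)$, a large ascent from small to big.

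For each of the five patterns I would then case-analyze where the two consecutive pairs (or, for $\tubr{23}\,\tubr{1}$, the consecutive pair together with the final position) lie among these blocks and junctions. The majority of cases are killed by size constraints: whenever the relative order mandated by the pattern forces a big-block entry to lie below a small-block entry (or vice versa), we get an immediate contradiction. Cases in which both pairs lie entirely within the big portion or entirely within $\hat\sigma$ reproduce the same vincular pattern inside $\hat\tau$ or $\hat\sigma$ respectively, contradicting the avoidance hypothesis on those smaller permutations. The most delicate cases are those in which the pattern straddles the right junction $(k+n, k+n+1)$: the fact that $\hat\tau_k = 1$ is immediately adjacent to $\hat\tau_{k+1}$ in $\hat\tau$ is crucial, since such a straddling occurrence translates into the same pattern inside $\hat\tau$ with $\hat\tau_k = 1$ playing the role of the pattern's smallest entry. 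A symmetric role is played by $\hat\sigma_n$ in the subcase $k = m$, where the final position of $\hat\pi$ lies in $\hat\sigma$ and a straddling $\tubr{23}\,\tubr{1}$ occurrence is absorbed into a $\tubr{23}\,\tubr{1}$ pattern inside $\hat\sigma$ with $\hat\sigma_n$ as the terminal entry.

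The main obstacle is simply the bookkeeping: with five patterns and roughly a dozen placement configurations per pattern, the write-up is long even though each case is either a short size comparison or a one-line reduction. The only genuine conceptual input is the observation just made: although the $1$ of $\hat\tau$ gets shifted up to $n+1$ in $\hat\pi$, it can always be restored to its original role as a minimal element when translating a straddling pattern back to $\hat\tau$. This is what lets every ``mixed block'' case collapse to an occurrence of one of the five avoided patterns in either $\hat\tau$ or $\hat\sigma$, completing the verification of Theorem~\ref{thm: 321-final} for $\hat\pi$.
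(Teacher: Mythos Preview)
Your approach is exactly the paper's: invoke Theorem~\ref{thm: 321-final}, observe $(\hat\tau\odot\hat\sigma)_1=n+m$, and then rule out each of the five vincular patterns by a block-by-block case analysis exploiting that the $\hat\sigma$-entries are all smaller than the shifted $\hat\tau$-entries, with $\hat\tau_k=1$ and $\hat\sigma_1=n$ handling the junction cases. Your sketch is sound and in fact slightly more careful than the paper, which tacitly assumes $k<m$ in its treatment of $\tubr{23}\,\tubr{1}$; your explicit mention of the $k=m$ subcase (where $\hat\pi$ ends in $\hat\sigma_n$) is a genuine case---though note that when the $\tubr{23}$ lies in the big block there, the reduction is to a $\tubr{23}\,\tubr{1}$ in $\hat\tau$ (using $\hat\tau_m=1$), not in $\hat\sigma$.
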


\begin{proof}
    By Theorem \ref{thm: 321-final} since $(\hat\tau \odot\hat\sigma)_1 = m+n$ by construction, it suffices to prove it also avoids $\tubr{32}\, \tubr{41}, \tubr{14}\,\tubr{23}, \tubr{41}\,\tubr{32}, \tubr{23}\,\tubr{14}, \tubr{23} \, \tubr{1}$. Notice, there will never be an occurrence of any of these patterns consisting solely of entries from $\hat\tau$ or $\hat\sigma$ because both $\tau$ and $\sigma$ are cyclic and avoid $321$.

    Since all the entries from $\hat\sigma$ are smaller than all the entries from $\hat\tau$, the only way $\hat\tau \odot \hat \sigma$ could contain a $\tubr{32}\, \tubr{41}$ pattern, is if $\tubr{41}$ was $(\hat\tau_k + n) \hat\sigma_1$ and $\tubr{32}$ was $(\hat\tau_i + n) (\hat\tau_{i+1} + n)$ for $i+1 < k$. But this is not possible because $\hat\tau_k = 1$. 

    Similarly, a $\tubr{14}\,\tubr{23}$ pattern could only occur if $\tubr{14}$ was $\hat\sigma_n (\hat\tau_{k+1} + n)$ and $\tubr{23}$ was $(\hat\tau_i + n) (\hat\tau_{i+1} + n)$ for $k < i$. But then $\hat\tau_k\hat\tau_{k+1}\hat\tau_{i}\hat\tau_{i+1}$ would be a $\tubr{14}\, \tubr{23}$ pattern in $\hat\tau$ because $\hat\tau_k = 1$. To contain a $\tubr{41}\,\tubr{32}$ pattern, we must have $\tubr{41}$ be $(\hat\tau_k + n) \hat\sigma_1$ and $\tubr{32}$ be $\hat\sigma_i\hat\sigma_{i+1}$. But then $\hat\sigma_1 = n$ so this is impossible. To contain a $\tubr{23} \, \tubr{14}$ pattern, we must have $\tubr{14}$ be $\hat\sigma_n (\hat\tau_{k+1} + n)$ and $\tubr{32}$ be $\hat\sigma_i\hat\sigma_{i+1}$. But then $\hat\sigma_i\hat\sigma_{i+1}\hat\sigma_n$ would be a $\tubr{23} \tubr{1}$ pattern in $\hat\sigma$. Finally, to contain a $\tubr{23} \, \tubr{1}$ pattern, since all the entries of $\hat\sigma$ are smaller than $\hat\tau_m+n$, this could only occur if there was already a $\tubr{23} \, \tubr{1}$ pattern in $\hat\tau$, which is not true.

     By Theorem \ref{thm: 321-final}, $\theta^{-1}(\hat\tau \odot\hat\sigma) \in \C_n(321)$.
\end{proof}

\begin{example}
    For example, consider $\hat\sigma = 7531642$ and $\hat\tau = 86421753$ which has $\sigma \in C_7(321)$ and $\tau \in C_8(321)$.
    \[
    \hat\tau \odot \hat\sigma = 15 \; 13 \; 11 \; 9 \;{\bf  8 \; 7 \; 5 \; 3 \; 1 \; 6 \; 4 \; 2\;} 14 \; 12 \; 10
    \]
    This corresponds to the permutation $6\; 14\; 1\; 2\;3\;4\;5\;7\;8\;15\;9\;10\;11\;12\;13$, which avoids $321$.
\end{example}

\begin{remark}
    This idea of placing a partial groupoid structure on cyclic permutations avoiding a pattern $\sigma \in \S_3$ to construct new cyclic permutations from others is the basis for the paper \cite{AGL25}. The construction $\hat\tau \odot \hat\sigma$ is different that the construction we used for $\C_n(321)$ in that paper. It is easier here to prove that $\theta^{-1}(\hat\tau \odot \hat\sigma)$ is still cyclic and avoids $321$ since Theorem \ref{thm: 321-final} characterizes cyclic avoidance in terms of avoidance in the fundamental bijection. We also never need the associativity of $\odot$, but it is not hard to verify.
\end{remark}

Throughout the rest of the paper, we let $c_n = |\C_n(321)|$, suppressing the $321$ for ease of notation. An immediately consequence of Lemma \ref{lem: monoidStructure} is,

\begin{corollary}
    For $n,m \geq 1$ we have
    \[
    c_n c_m \leq c_{n+m}
    \]
\end{corollary}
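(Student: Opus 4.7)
The plan is to upgrade Lemma \ref{lem: monoidStructure} into an injection $\C_n(321)\times\C_m(321)\hookrightarrow\C_{n+m}(321)$; the counting inequality then follows immediately. Fix $n,m\geq 1$ and define
\[
\Phi:\C_n(321)\times\C_m(321)\to\C_{n+m}(321),\qquad \Phi(\sigma,\tau)=\theta^{-1}(\hat\tau\odot\hat\sigma).
\]
That $\Phi$ is well-defined (lands in $\C_{n+m}(321)$) is exactly Lemma \ref{lem: monoidStructure}, so the only thing left is injectivity.

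To prove $\Phi$ is injective I would argue that both $\hat\sigma$ and $\hat\tau$ can be read off from the word $\hat\tau\odot\hat\sigma$. From the definition of $\odot$, every entry coming from $\hat\sigma$ has value in $[1,n]$, while every entry coming from $\hat\tau$ appears shifted by $n$ and so has value in $[n+1,n+m]$. Thus the entries of value $\leq n$, read left to right, reconstruct $\hat\sigma_1\hat\sigma_2\cdots\hat\sigma_n$, and the entries of value $> n$, read left to right and then reduced by $n$, reconstruct $\hat\tau_1\cdots\hat\tau_k\hat\tau_{k+1}\cdots\hat\tau_m$. Since $\theta$ is a bijection, this recovers the original pair $(\sigma,\tau)$.

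Having established injectivity, we conclude
\[
c_n c_m=|\C_n(321)\times\C_m(321)|\leq|\C_{n+m}(321)|=c_{n+m},
\]
which is the desired statement. There is no real obstacle in this argument: the index $k$ with $\hat\tau_k=1$ is intrinsic to $\tau$ and does not need to be recovered separately, and the value split $\{\leq n\}$ versus $\{>n\}$ cleanly separates the two factors because $n$ and $m$ are fixed in the statement. So the corollary is essentially a bookkeeping consequence of the semigroup structure built in Lemma \ref{lem: monoidStructure}.
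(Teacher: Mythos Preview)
Your proof is correct and is essentially the same as the paper's: both define the map $(\sigma,\tau)\mapsto\theta^{-1}(\hat\tau\odot\hat\sigma)$, invoke Lemma \ref{lem: monoidStructure} for well-definedness, and prove injectivity by recovering $\hat\sigma$ and $\hat\tau$ from the value split $[1,n]$ versus $[n+1,n+m]$ in $\hat\tau\odot\hat\sigma$. The paper phrases the recovery of $\hat\sigma$ slightly differently (as the $n$ entries following the value $n+1$), but this is equivalent to your value-based split.
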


\begin{proof}
    This immediately follows by noticing that the map from $\C_n(321) \times \C_m(321) \to \C_{n+m}(321)$ defined by,
    \[
    (\tau, \sigma) \to \hat\tau \odot \hat\sigma
    \]
    is an injection. Indeed, suppose $\hat\tau_1 \odot \hat\sigma_1 = \hat\tau_2 \odot \hat\sigma_2$ we claim that $\tau_1 = \tau_2$ and $\sigma_1 = \sigma_2$. First, notice $\hat\sigma$ in $\hat\tau \odot \hat\sigma$ is precisely the $n$ elements that occur after $n+1$. From this we conclude that $\sigma_1 = \sigma_2$. Similarly, one can recover $\hat\tau$ as the elements in $\hat\tau \odot \hat\sigma$ that are larger than $n$, so $\tau_1 = \tau_2$.
\end{proof}

We can use this to conclude,

\begin{proposition}
    The limit $\lim_{n \to \infty} \sqrt[n]{c_n} = \limsup_{n \to \infty} \sqrt[n]{c_n}$ exists.
\end{proposition}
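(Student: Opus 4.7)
The plan is a direct invocation of Fekete's lemma in its multiplicative form. The preceding corollary establishes that $c_n c_m \leq c_{n+m}$ for all $n,m \geq 1$, so taking logarithms yields a superadditive sequence $(\log c_n)$. Fekete's lemma then gives
\[
\lim_{n \to \infty} \frac{\log c_n}{n} = \sup_{n \geq 1} \frac{\log c_n}{n},
\]
and exponentiating produces $\lim_{n \to \infty} \sqrt[n]{c_n} = \sup_{n \geq 1} \sqrt[n]{c_n}$, which in particular coincides with $\limsup_{n \to \infty} \sqrt[n]{c_n}$.

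To confirm that the limit is finite I would appeal to the obvious inclusion $\C_n(321) \subseteq \S_n(321)$, so that $c_n \leq |\S_n(321)| = C_n$, the $n$th Catalan number. Since $C_n \leq 4^n$, this gives $\sqrt[n]{c_n} \leq 4$ for every $n$, and combining with the lower bound $c_n \geq 2^{n-2}$ from Corollary \ref{cor: 132-231} the common value lies in the interval $[2,4]$. Thus the supremum is attained as a genuine real limit rather than as $+\infty$.

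If the write-up should be self-contained rather than cite Fekete, the standard inline argument suffices: for fixed $n$ and arbitrary $N$, write $N = qn + r$ with $0 \leq r < n$ and iterate supermultiplicativity to obtain $c_N \geq c_n^{\,q} c_r$, whence
\[
\liminf_{N \to \infty} \frac{\log c_N}{N} \geq \frac{\log c_n}{n}.
\]
Taking the supremum over $n$ on the right gives $\liminf \geq \sup \geq \limsup$, forcing equality throughout.

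There is essentially no obstacle here, since all the genuine content sits in the preceding corollary, which in turn rests on Theorem \ref{thm: 321-final} via the semigroup operation $\odot$. The only minor points to watch in the writeup are the distinction between the multiplicative and additive forms of Fekete's lemma (handled by passing through logarithms) and ensuring that $c_n \geq 1$ for all $n \geq 1$ so the logarithm is well-defined; this is immediate since the identity cycle of length $n$ avoids $321$ trivially for $n \leq 2$ and $c_n \geq 2^{n-2}$ for $n \geq 2$.
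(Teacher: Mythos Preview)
Your proposal is correct and follows essentially the same route as the paper: the paper simply cites the supermultiplicativity $c_n c_m \le c_{n+m}$ (from the preceding corollary/Lemma \ref{lem: monoidStructure}) and invokes Fekete's lemma. Your version is a bit more careful, checking finiteness via $c_n \le C_n \le 4^n$ and positivity via $c_n \ge 1$, but these are refinements of the same argument rather than a different approach.
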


\begin{proof}
    By Lemma \ref{lem: monoidStructure}, for $n,m \geq 1$, $c_n \cdot c_m(321) \leq c_{n+m}(321)$. By Fekete's lemma, we conclude that the limit $\lim_{n \to \infty} \sqrt[n]{c_n}$ exists.
\end{proof}

    

We believe a stronger result is true,

\begin{conjecture}
    For $n \geq 1$, $\sqrt[n]{c_n}$ is monotonically increasing.
\end{conjecture}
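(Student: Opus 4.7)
The plan is to reduce the conjecture to log-convexity of the sequence $(c_n)$ and then attempt log-convexity via a combinatorial injection.

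For the reduction: if $c_{n-1}\,c_{n+1} \geq c_n^2$ for all $n \geq 2$, then $\sqrt[n]{c_n}$ is non-decreasing. Setting $r_n := c_n/c_{n-1}$, log-convexity says $(r_n)$ is non-decreasing. From Corollary~\ref{cor: 132-231} (combined with the $c_n \geq 2 c_{n-1}$ bound from \cite{BC19}) we have $r_n \geq 2$ for $n \geq 3$, and by direct computation $r_2 = 1$, so $r_n \geq 1$ for all $n \geq 2$. Using $c_1 = 1$, we have $c_n = \prod_{k=2}^{n} r_k$, a product of $n-1$ factors each bounded above by $r_{n+1}$. Thus
\[
\left(\frac{c_{n+1}}{c_n}\right)^{n} = r_{n+1}^{\,n} \;\geq\; r_{n+1}^{\,n-1} \;\geq\; \prod_{k=2}^{n} r_k \;=\; c_n,
\]
which rearranges to $c_{n+1}^{\,n} \geq c_n^{\,n+1}$, i.e., $\sqrt[n]{c_n} \leq \sqrt[n+1]{c_{n+1}}$.

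The core task is therefore to prove $c_{n-1}\,c_{n+1} \geq c_n^2$. The natural approach is to construct a direct injection
\[
\Phi \colon \C_n(321) \times \C_n(321) \hookrightarrow \C_{n-1}(321) \times \C_{n+1}(321),
\]
where one identifies a canonical ``moveable'' element of $\sigma$ and a canonical target position in $\tau$ (both determined by the pair $(\sigma,\tau)$), producing $\sigma' \in \C_{n-1}(321)$ by deletion and $\tau' \in \C_{n+1}(321)$ by insertion. Theorem~\ref{thm: 321-final} makes $321$-avoidance a local condition on $\hat\pi$ (avoidance of five short vincular patterns plus $\hat\pi_1 = n$), so preservation of avoidance under a controlled local modification of $\hat\sigma$ and $\hat\tau$ should be checkable pattern-by-pattern. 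An alternative formulation via Corollary~\ref{cor: 321-A} reduces the question to log-convexity of $|\S_{n-1}(\Pi)|$ for the six-pattern set $\Pi$, for which one can try tools from the pattern-avoidance literature (insertion/deletion bijections, generating-function identities, or a sign-reversing involution).

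The hard part is constructing the injection $\Phi$: both $321$-avoidance and cyclicity are global conditions, and the ``canonical'' feature used to decide which element to transfer must interact cleanly with both the vincular patterns and the constraint $\hat\pi_1 = n$. A useful preliminary step would be to verify log-convexity numerically for small $n$ via Corollary~\ref{cor: 321-A}; if $c_{n+1} c_{n-1}/c_n^2$ displays a structured pattern, that structure could guide the definition of $\Phi$. If log-convexity should turn out to fail for some $n$, the reduction above collapses, and one would instead attempt $c_{n+1}^{\,n} \geq c_n^{\,n+1}$ directly by showing that each $\sigma \in \C_n(321)$ admits, on average, at least $c_n^{1/n}$ distinct extensions in $\C_{n+1}(321)$, using the operation $\odot$ from Lemma~\ref{lem: monoidStructure} together with an accounting of the indecomposable extensions not already captured by that semigroup action.
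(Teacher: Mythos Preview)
The statement you are addressing is a \emph{conjecture} in the paper, not a theorem; the paper offers no proof and explicitly says ``If we knew this, it would obviously imply the previous result.'' So there is no paper-proof to compare against.

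Your proposal is likewise not a proof. The reduction step is fine: log-convexity of $(c_n)$ together with $r_n := c_n/c_{n-1} \geq 1$ does imply that $\sqrt[n]{c_n}$ is non-decreasing, and your chain $r_{n+1}^{\,n} \geq r_{n+1}^{\,n-1} \geq \prod_{k=2}^n r_k = c_n$ is a correct way to see this (the first inequality uses $r_{n+1} \geq 1$, the second uses $r_{n+1} \geq r_k$ for $k \leq n$). But after that reduction you do not prove log-convexity. You describe what an injection $\Phi \colon \C_n(321)^2 \hookrightarrow \C_{n-1}(321) \times \C_{n+1}(321)$ would need to do, acknowledge that ``the hard part is constructing the injection $\Phi$,'' and then stop. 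The remainder of the write-up is a list of heuristics (try the vincular characterization, try numerics, try counting extensions on average) rather than an argument.

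In short, the genuine gap is that the central claim $c_{n-1}c_{n+1} \geq c_n^2$ is asserted as the target but never established; no candidate for the ``canonical moveable element'' or the ``canonical target position'' is proposed, and no verification that such a transfer preserves cyclicity or the five vincular conditions of Theorem~\ref{thm: 321-final} is carried out. As written, this is a reasonable research plan for attacking the conjecture, but it does not constitute a proof, and the paper itself makes no claim to have one.
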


If we knew this, it would obviously imply the previous result because $\sqrt[n]{c_n}$ is bounded.

\section{Structure of $\mathcal{A}_n$} \label{sec: An}

We will now focus on the set of permutations avoiding the six patterns from Corollary \ref{cor: 321-A}. We will let $\mathcal{A}_n = \S_{n}(\tubr{32}\, \tubr{41}, \tubr{14}\,\tubr{23}, \tubr{41}\,\tubr{32}, \tubr{23}\,\tubr{14}, \tubr{23} \, \tubr{1},\tubr{1} \, \tubr{32})$. By Corollary \ref{cor: 321-A}, $\mathcal{A}_n$ can equivalently be characterized as the permutations $\tau \in \S_{n}$ with $\theta^{-1}(1 \ominus \tau) \in \C_{n+1}(321)$. We let $a_n = |\mathcal{A}_n|$. So Corollary \ref{cor: 321-A} states $c_n = a_{n-1}$ for $n \geq 2$.

In this section, we explore the structure of $\mathcal{A}_n$. We first use the description in Corollary \ref{cor: 321-A} to explicitly construct permutations in $\mathcal{A}_n$.

\begin{lemma}
    Given any permutation $\pi = \pi_1 \pi_2 \cdots \pi_{k-1} 1 \pi_{k+1} \cdots \pi_n \in \S_n$, if $\pi_1 > \pi_2 > \cdots > \pi_{k-1} > 1$ and $\pi_{k+1} < \pi_{k+2} < \cdots < \pi_n$ then $\pi \in \mathcal{A}_n$.
\end{lemma}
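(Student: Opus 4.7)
The key observation is that $\pi$ has a very rigid shape: it is strictly decreasing on positions $1,\ldots,k$ (ending at $\pi_k=1$) and strictly increasing on positions $k,\ldots,n$ (starting at $\pi_k=1$). In particular, every adjacent descent $\pi_i>\pi_{i+1}$ occurs at some $i\in\{1,\ldots,k-1\}$ and lies inside the single decreasing run $\pi_1>\pi_2>\cdots>\pi_k=1$, while every adjacent ascent $\pi_j<\pi_{j+1}$ occurs at some $j\in\{k,\ldots,n-1\}$ and lies inside the single increasing run $1=\pi_k<\pi_{k+1}<\cdots<\pi_n$. I would state this structural observation up front and then verify each of the six forbidden patterns by a short direct case check.

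\textbf{How I would carry out each case.} For $\tubr{41}\,\tubr{32}$ and $\tubr{32}\,\tubr{41}$, both marked pairs must be adjacent descents, so both lie within the single decreasing run $\pi_1>\cdots>\pi_k=1$. But any four entries chosen from such a run, taken in left-to-right order, give the pattern $4321$, which is neither $4132$ nor $3241$. For $\tubr{14}\,\tubr{23}$ and $\tubr{23}\,\tubr{14}$, both marked pairs must be adjacent ascents, hence lie within the single increasing run $1=\pi_k<\cdots<\pi_n$; any four entries read left-to-right from this run form $1234$, which is neither $1423$ nor $2314$. For $\tubr{23}\,\tubr{1}$, the last entry $\pi_n$ would need to be smaller than the values of some earlier adjacent ascent; but if $k<n$ then $\pi_n$ is the \emph{maximum} of the increasing run (the only place adjacent ascents can live), and if $k=n$ there are no adjacent ascents at all. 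Dually, for $\tubr{1}\,\tubr{32}$, $\pi_1$ would need to be smaller than both entries of some later adjacent descent; but $\pi_1$ is the maximum of the decreasing run (the only place adjacent descents can live), which gives the contradiction.

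\textbf{Expected obstacle.} There isn't a hard step; the entire proof is a structural observation plus a bookkeeping case analysis. The only care required is handling the degenerate cases $k=1$ (no decreasing prefix, so $\pi=12\cdots n$) and $k=n$ (no increasing suffix, so $\pi=n(n-1)\cdots 21$), where one of the runs is empty and several patterns are vacuously avoided. I would handle these by simply noting that each of the six forbidden patterns requires at least one adjacent descent or ascent with further entries to its right or left, and pointing out which cases collapse. Once the run structure is stated cleanly, each of the six verifications is a one-line contradiction from monotonicity.
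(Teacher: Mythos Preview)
Your proof is correct. Every step of the case analysis goes through exactly as you describe: the two marked-descent patterns die because all adjacent descents live in a single decreasing run, the two marked-ascent patterns die symmetrically, and the two length-three patterns die because $\pi_1$ (respectively $\pi_n$) is the maximum of the only run in which the required adjacent descent (respectively ascent) could occur.

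The paper takes a different, shorter route. It observes that the permutations described in the lemma are precisely the elements of $\S_n(132,231)$, and then invokes Corollary~\ref{cor: 132-231}, whose proof already recorded the containment $\S_{n-1}(132,231)\subset\mathcal{A}_{n-1}$. The underlying reason that containment holds is that each of the six vincular patterns, read as an ordinary pattern, contains either $132$ or $231$; hence any $\{132,231\}$-avoider automatically avoids all six. Your argument is more self-contained and elementary---it never needs to recognise the class as $\S_n(132,231)$ nor to check classical-pattern containments---at the cost of six short explicit cases. The paper's argument is a one-liner once the earlier corollary is in hand, and it makes the connection to classical $\{132,231\}$-avoidance explicit, which is what motivates the $2^{n-2}$ count.
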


\begin{proof}
    This is a restatement of Corollary \ref{cor: 132-231}.
\end{proof}

We include this restatement just to get a feel for what the permutations in $\mathcal{A}_n$ can look like. The following result is also a restatement of a known result about $\C_n(321)$ which illustrates how the structure of those permutations is reflected in the $\mathcal{A}_{n-1}$,

\begin{lemma} \label{lem: reflect}
    Given $\pi \in \mathcal{A}_n$, $\pi^r \in \mathcal{A}_n$
\end{lemma}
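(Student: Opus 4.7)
The plan is to prove this by a direct symmetry argument: reversal of a permutation preserves avoidance of $\mathcal{A}_n$ because the defining set of six vincular patterns is itself closed under the ``reverse'' operation on vincular patterns. Nothing about the fundamental bijection or cycle structure is needed.

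First, I would record the elementary observation about how reversal interacts with vincular pattern containment. If $\pi$ contains the vincular pattern $\sigma = \sigma_1\cdots\sigma_k$ (with some marking of consecutive or end-position entries) via the subsequence $\pi_{i_1}\cdots\pi_{i_k}$, then in $\pi^r$ the subsequence at the reversed positions $n+1-i_k < \cdots < n+1-i_1$ is order-isomorphic to $\sigma_k\cdots\sigma_1$. Adjacency of two entries in $\pi$ is preserved under reversal (they remain adjacent in $\pi^r$), and the first entry of $\pi$ becomes the last entry of $\pi^r$ and vice versa. Thus $\pi$ contains a vincular pattern $\sigma$ if and only if $\pi^r$ contains the ``reversed'' vincular pattern $\sigma^\flat$, obtained from $\sigma$ by reversing the one-line notation and accordingly transferring any bracket on positions $\{j,j+1\}$ to positions $\{k-j,k-j+1\}$ and any bracket on the first (resp.\ last) entry to the last (resp.\ first) entry.

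Next I would verify that the six patterns defining $\mathcal{A}_n$ pair up under this $\sigma \mapsto \sigma^\flat$ operation:
\begin{align*}
(\tubr{32}\,\tubr{41})^\flat &= \tubr{14}\,\tubr{23}, &
(\tubr{41}\,\tubr{32})^\flat &= \tubr{23}\,\tubr{14}, &
(\tubr{23}\,\tubr{1})^\flat &= \tubr{1}\,\tubr{32}.
\end{align*}
Each check is a direct inspection: reverse the digit string and move each bracket to the mirror position. For example, reversing $\tubr{32}\,\tubr{41}$ turns the digit string $3241$ into $1423$, the bracket on positions $\{1,2\}$ into a bracket on positions $\{3,4\}$, and the bracket on positions $\{3,4\}$ into a bracket on positions $\{1,2\}$, yielding $\tubr{14}\,\tubr{23}$. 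The other pairs are entirely analogous.

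Since the defining forbidden set for $\mathcal{A}_n$ equals its own image under the $(\cdot)^\flat$ operation, the first paragraph's observation gives $\pi \in \mathcal{A}_n \iff \pi^r \in \mathcal{A}_n$. I do not expect any obstacle beyond correctly tracking the bracket positions under reversal; the only care required is to confirm the six patterns really do pair up, which is the content of the short calculation above.
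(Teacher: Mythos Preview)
Your argument is correct and is exactly the approach taken in the paper, which simply notes that the set of six vincular patterns defining $\mathcal{A}_n$ is closed under reversal. Your write-up is more detailed than the paper's one-line proof, but the content is the same.
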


\begin{proof}
    All of the patterns that characterize $\mathcal{A}_n$ in Corollary \ref{cor: 321-A} are invariant under taking the reverse.
\end{proof}

Reversing a permutation in $\mathcal{A}_n$ corresponds to taking the inverse of the corresponding permutation in $\C_{n+1}(321)$. So this is another way to show that if $\pi \in \C_n(321)$, then so is $\pi^{-1}$. 

\begin{lemma}
    Given $\pi \in \mathcal{A}_n$, we cannot have $\pi_1 < \pi_2$ and $\pi_{n-1} > \pi_n$.
\end{lemma}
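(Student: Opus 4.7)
The plan is to argue by contradiction: assume $\pi \in \mathcal{A}_n$ satisfies both $\pi_1 < \pi_2$ and $\pi_{n-1} > \pi_n$, and show that $\pi$ must contain one of the two vincular patterns that pin an entry to the start or end of the permutation, namely $\tubr{23}\,\tubr{1}$ or $\tubr{1}\,\tubr{32}$ from the list defining $\mathcal{A}_n$ in Corollary \ref{cor: 321-A}.

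First I would use the ascent at the front. An occurrence of $\tubr{23}\,\tubr{1}$ in $\pi$ is precisely a triple of the form $\pi_i\,\pi_{i+1}\,\pi_n$ with $\pi_n < \pi_i < \pi_{i+1}$, since the trailing $\tubr{1}$ forces the smallest entry of the pattern to be the last entry of $\pi$, while $\tubr{23}$ is a consecutive ascent. Taking $i = 1$ and applying $\pi_1 < \pi_2$, we see that if $\pi_n < \pi_1$ then $\pi_1\pi_2\pi_n$ realizes $\tubr{23}\,\tubr{1}$. Since $\pi$ avoids this pattern, we must have $\pi_n > \pi_1$.

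Next I would run the symmetric argument at the back. An occurrence of $\tubr{1}\,\tubr{32}$ in $\pi$ is a triple $\pi_1\,\pi_i\,\pi_{i+1}$ with $\pi_1 < \pi_{i+1} < \pi_i$, since the leading $\tubr{1}$ forces the smallest entry to sit at position $1$ and $\tubr{32}$ is a consecutive descent. Taking $i = n-1$ and applying $\pi_{n-1} > \pi_n$, we see that if $\pi_1 < \pi_n$ then $\pi_1\pi_{n-1}\pi_n$ realizes $\tubr{1}\,\tubr{32}$. Avoidance thus forces $\pi_1 > \pi_n$.

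The two inequalities $\pi_n > \pi_1$ and $\pi_1 > \pi_n$ are incompatible since the entries are distinct, yielding the desired contradiction. There is no real obstacle here; the argument is immediate once one unpacks what the two ``anchored'' vincular patterns $\tubr{23}\,\tubr{1}$ and $\tubr{1}\,\tubr{32}$ mean, and it is an instance of the reflection symmetry observed in Lemma \ref{lem: reflect}: these two forbidden patterns are reverses of each other, and together they rule out the configuration where an ascent at the start coexists with a descent at the end.
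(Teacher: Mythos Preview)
Your proof is correct and matches the paper's approach: the paper also compares $\pi_1$ with $\pi_n$ and observes that $\pi_1 < \pi_n$ yields the $\tubr{1}\,\tubr{32}$ instance $\pi_1\pi_{n-1}\pi_n$, while $\pi_1 > \pi_n$ yields the $\tubr{23}\,\tubr{1}$ instance $\pi_1\pi_2\pi_n$. Your version simply phrases the same dichotomy as two forced inequalities that contradict each other.
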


\begin{proof}
    If $\pi_1 < \pi_n$, then $\pi_1\pi_{n-1}\pi_{n}$ is a $\tubr{1}\,\tubr{32}$. If $\pi_1 > \pi_n$ then $\pi_1\pi_2\pi_n$ is a $\tubr{23} \,\tubr{1}$.
\end{proof}

This means that if $\pi_1 < \pi_2$ we must have $\pi_{n-1} < \pi_n$, and if $\pi_{n-1} > \pi_n$ we must have $\pi_1 > \pi_2$. We could also have $\pi_1 > \pi_2$ and $\pi_{n-1} < \pi_n$. 

\subsection{Simple Permutations}
In this subsection, we will construct all permutations in $\mathcal{A}_n$ by inflating simple permutations in a very specific way. Recall that a permutation $\pi \in \S_n$ is \emph{simple} if there is no proper subset $[i,j] \subsetneq [n]$ where ${\rm red}(\pi_{i} \pi_{i+1} \cdots \pi_{j}) = [1,j-i+1]$. For example, $\pi = 231$ is not simple because ${\rm red}(\pi_1\pi_2) = 12$, but $\pi = 2413$ is simple because every consecutive sub-permutation is not equal to a consecutive interval. 

Given $\pi = \pi_1 \ldots \pi_n \in \S_n$ and $\sigma = \sigma_1 \ldots \sigma_k \in S_k$, the {\em inflation} of $\pi$ by $\sigma$ at $a$ where $1 \leq a \leq n$ is the permutation obtained by replacing $a$ in $\pi$ with $(\sigma_1 + a - 1)\cdots(\sigma_k+a-1)$, and shifting all entries in $\pi$ larger than $a$ up by $k-1$. For example, the inflation of $\pi = 4132$ by $\sigma = 321$ at $a = 3$ is $61{\bf543}2$, where the elements corresponding to $\sigma$ are bold.

We will most often be inflating permutations at $1$, so we denote by $\pi[\sigma]$ the inflation of $\pi$ at $1$ by $\sigma$, so that 
\[
\pi[\sigma] = (\pi_1+k-1) \ldots (\pi_{k}+k-1) \sigma_1 \ldots \sigma_k (\pi_{k+2}+k-1) \ldots (\pi_n+k-1)
\]
where $\pi_{k+1} = 1$. For example, $213[1342]= 5{\bf 1342}6$. This looks similar to our $\odot$ operator from Section \ref{sec: convergence}, but it is slightly different since we are replacing $1$ with $\sigma$, rather than just inserting $\sigma$.

\begin{lemma} \label{lem: inflation}
    Given $\pi \in \mathcal{A}_n$ and $\sigma \in \mathcal{A}_m$, $\pi[\sigma] \in \mathcal{A}_{n+m-1}$.
\end{lemma}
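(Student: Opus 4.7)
The plan is a direct case-check that $\pi[\sigma]$ avoids each of the six patterns defining $\mathcal{A}$. Write
\[
\pi[\sigma] = P_1 \cdots P_{\kappa-1}\,S_1 \cdots S_m\,P_{\kappa+1} \cdots P_n
\]
with $\pi_\kappa = 1$, $P_i = \pi_i + m - 1$, and $S_j = \sigma_j$. Two structural facts will do all the work: (i) every $S_j \le m < m+1 \le P_i$, so $\sigma$-entries are strictly below $\pi$-entries in $\pi[\sigma]$; and (ii) the only mixed-type adjacent pairs in $\pi[\sigma]$ are $P_{\kappa-1}S_1$ (when $\kappa > 1$) and $S_m P_{\kappa+1}$ (when $\kappa < n$). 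Note also that $(\pi[\sigma])^r = (\pi^r)[\sigma^r]$, so by Lemma \ref{lem: reflect} the six forbidden patterns pair up under reversal into $\{\tubr{32}\tubr{41},\tubr{14}\tubr{23}\}$, $\{\tubr{41}\tubr{32},\tubr{23}\tubr{14}\}$, and $\{\tubr{23}\tubr{1},\tubr{1}\tubr{32}\}$; it therefore suffices to rule out the three representatives $\tubr{32}\tubr{41}$, $\tubr{41}\tubr{32}$, and $\tubr{23}\tubr{1}$.

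For each $\rho$ in that short list, I will assume an occurrence in $\pi[\sigma]$ and split on how many of its entries come from the $\sigma$-block. If all come from $P$-entries, I contract the $\sigma$-block back to the single letter $1$: two $P$-entries adjacent in $\pi[\sigma]$ must lie on the same side of the $\sigma$-block and so remain adjacent in $\pi$, and a marked endpoint of $\rho$ at position $1$ or $n+m-1$ corresponds to position $1$ or $n$ of $\pi$ whenever the entry is a $P$-entry. This yields a $\rho$-occurrence in $\pi$, contradicting $\pi \in \mathcal{A}_n$. If all entries are $\sigma$-entries, the occurrence lives inside the $\sigma$-block and contradicts $\sigma \in \mathcal{A}_m$.

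The intermediate mixed cases are very constrained: by (i) the $\sigma$-entries of the occurrence must realize the lowest-valued slots of $\rho$ (a short list of value-splits, three or four per pattern), and by (ii) any cross-boundary adjacency inside a marked group of $\rho$ must be $P_{\kappa-1}S_1$ or $S_m P_{\kappa+1}$. Working through the cases, each will fall into one of four outcomes: two marked groups of $\rho$ would demand the same boundary pair and so collapse two pattern positions onto one entry; a value relation required by $\rho$ is impossible across the $\sigma/\pi$ boundary by (i); contracting the $\sigma$-block to $1$ reinserts $\pi_\kappa = 1$ in the bottom-valued slot of $\rho$ and yields a $\rho$-occurrence in $\pi$; or the $\sigma$-entries themselves — now with $S_1$ or $S_m$ automatically playing an endpoint role inside $\sigma$ — form an instance of a forbidden pattern (sometimes $\rho$, sometimes for example $\tubr{1}\tubr{32}$ in the three-$\sigma$-entry mix of $\tubr{41}\tubr{32}$) in $\sigma$. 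In every sub-case one of the hypotheses $\pi \in \mathcal{A}_n$, $\sigma \in \mathcal{A}_m$ is contradicted. The main obstacle is purely bookkeeping: there is no conceptual difficulty beyond keeping (i), (ii), and the reversal reduction in view, and I expect each individual mix to die in a one-line value/position check.
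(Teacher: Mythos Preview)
Your proposal is correct and follows essentially the same approach as the paper: a direct case-check that $\pi[\sigma]$ avoids each of the six defining vincular patterns, using that any occurrence must reduce either to a forbidden pattern in $\pi$ (by collapsing the $\sigma$-block) or to a forbidden pattern in $\sigma$. The paper's proof is terser---it simply asserts that inflating at $1$ can only ``create'' the patterns $\tubr{41}\,\tubr{32}$ and $\tubr{23}\,\tubr{14}$, and that these force a $\tubr{1}\,\tubr{32}$ or $\tubr{23}\,\tubr{1}$ in $\sigma$---whereas you make the mixed-type bookkeeping explicit via your structural facts (i) and (ii), and you add the reversal reduction $(\pi[\sigma])^r=(\pi^r)[\sigma^r]$ together with Lemma~\ref{lem: reflect} to halve the work; this is a nice efficiency but not a genuinely different argument.
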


\begin{proof}
    \sloppy By Corollary \ref{cor: 321-A}, it suffices to show that $\pi[\sigma]$ avoids $\{[\tubr{32}\, \tubr{41}], [\tubr{14}\, \tubr{23}], [\tubr{41}\, \tubr{32}], [\tubr{23} \,\tubr{14}], [\tubr{23} \, \tubr{1}], [\tubr{1} \, \tubr{32}]\}$. Since we are inflating $1$ by a permutation $\sigma \in \mathcal{A}_m$, the only patterns we could create are $[\tubr{41} \, \tubr{32}]$ and $[\tubr{23}\, \tubr{14}]$. We avoid all the others because $\pi$ avoids them.
    
    But we cannot create these patterns because a $[\tubr{41} \, \tubr{32}]$ would imply that $\sigma$ contains a $\tubr{1} \, \tubr{32}$ pattern and similarly a $[\tubr{23}\, \tubr{14}]$ implies that $\sigma$ contains a $\tubr{23}\, \tubr{1}$ pattern.
\end{proof}

We can describe all permutations in $\mathcal{A}_n$ by inflating $1$ in simple permutations:

\begin{lemma} \label{lem: infateAt1}
    If $\tau \in \mathcal{A}_n$ is not simple, we have $\tau = \pi[\sigma]$ for a unique simple $\pi \in \mathcal{A}_m$, and $\sigma \in \mathcal{A}_{n-m+1}$.
\end{lemma}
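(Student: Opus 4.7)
The plan is to decompose $\tau$ by finding its maximal $1$-block, letting $\sigma$ be that block and $\pi$ be the quotient, then verifying in order that $\sigma,\pi\in\mathcal{A}$ and that $\pi$ is simple.

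Setup. Let $p$ be the position of the value $1$ in $\tau$. Among all proper intervals $[p^*,q^*]\subsetneq[n]$ with $p^*\le p\le q^*$ whose values form $\{1,2,\dots,q^*-p^*+1\}$, any two must overlap at $p$, and one checks that their union is again such an interval; hence there is a unique maximum $B=[p^*,q^*]$, say of size $k$. Set $\sigma:=\mathrm{red}(\tau_{p^*}\cdots\tau_{q^*})$ and let $\pi$ be obtained from $\tau$ by replacing $B$ with a single entry and relabeling. By construction $\tau=\pi[\sigma]$. That $B$ is genuinely non-trivial ($k\ge 2$) uses $\tau$ non-simple: any proper block is either a $1$-block already, or else one pairs its internal ascent/descent with the value $1$ (and with $\tau_1$ or $\tau_n$) to force a forbidden pattern unless a genuine $1$-block of size $\ge 2$ is already present.

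Membership in $\mathcal{A}$. For $\sigma$: the four adjacency-only patterns $\tubr{32}\tubr{41}$, $\tubr{14}\tubr{23}$, $\tubr{41}\tubr{32}$, $\tubr{23}\tubr{14}$ transfer to $\tau$ directly since $B$ occupies consecutive positions, so avoidance in $\tau$ forces avoidance in $\sigma$. For $\tubr{23}\tubr{1}$ in $\sigma$: if $q^*=n$ the last-entry marker survives the lift; if $q^*<n$, then $\tau_{q^*+1}$ is adjacent in $\tau$ to $\tau_{q^*}$ and has value $>k$, and combining with the $\tubr{23}\tubr{1}$ in $\sigma$ produces a $\tubr{23}\tubr{14}$ in $\tau$, contradicting $\tau\in\mathcal{A}$. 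The symmetric argument using $\tau_{p^*-1}$ handles $\tubr{1}\tubr{32}$. For $\pi$, patterns lift to $\tau$ by choosing an appropriate endpoint of $B$ as the representative for the collapsed position; adjacency is preserved because this endpoint is adjacent in $\tau$ to the corresponding unaffected position on the other side.

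Simplicity of $\pi$. A proper block $[i',j']$ of $\pi$ either contains the collapsed position $p^*$ or is disjoint from it. In the first case, its lift to $\tau$ is a proper $1$-block of size $>k$, contradicting the maximality of $B$. In the second case, the block lifts to a non-$1$-block of $\tau$ disjoint from $B$ with values $[a,b]$, $a\ge k+1$; if this block is adjacent to $B$ and $a=k+1$, then the union is a proper $1$-block strictly larger than $B$, contradicting maximality. Otherwise the element immediately outside $B$ on the block's side (which by maximality does not hold value $k+1$) combines with an internal ascent or descent of the block and with a suitable entry of $B$ to force one of $\tubr{23}\tubr{14}$, $\tubr{41}\tubr{32}$, $\tubr{32}\tubr{41}$, or $\tubr{14}\tubr{23}$ in $\tau$. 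This case analysis, which is the main technical obstacle, is where the six defining patterns of $\mathcal{A}$ genuinely all come into play, and must be organized by the side of $B$ on which the block sits, by the choice of internal ascent versus descent, and by whether the block is adjacent to $B$ or separated by a gap.

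Uniqueness follows from the first case of the previous paragraph: any expression $\tau=\pi'[\sigma']$ with $\pi'$ simple corresponds to a choice of $1$-block of $\tau$ (the positions of $\sigma'$), and if that choice is strictly smaller than $B$ then the additional elements of $B$ descend to a proper $1$-block of $\pi'$, contradicting simplicity. Therefore the $1$-block used must be $B$, which determines $\pi$ and $\sigma$ uniquely.
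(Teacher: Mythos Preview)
Your overall strategy---take the maximal $1$-block $B$, let $\sigma$ be its reduction and $\pi$ the quotient---is sound and genuinely different from the paper's route, which instead invokes the Albert--Atkinson substitution decomposition as a black box and then argues that any non-trivial inflation of a simple $\pi\in\mathcal{A}$ at a value $a>1$ forces a forbidden pattern. Your membership arguments for $\sigma,\pi\in\mathcal{A}$ and your uniqueness argument are correct.

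The gap is in the simplicity of $\pi$ (and, equivalently, in the Setup claim that $k\ge2$). You reduce to ruling out a block $[c,d]$ of $\tau$ with value set $[a,b]$, $a\ge k+1$, disjoint from $B$, and propose to combine the adjacent pair $\tau_{q^*}\tau_{q^*+1}$ (or $\tau_{p^*-1}\tau_{p^*}$) with an internal ascent or descent of that block and ``a suitable entry of $B$''. But observe that $\tau_{q^*}\tau_{q^*+1}$ is \emph{always} an ascent and $\tau_{p^*-1}\tau_{p^*}$ is always a descent, while each of the four length-$4$ forbidden patterns $\tubr{32}\,\tubr{41}$, $\tubr{14}\,\tubr{23}$, $\tubr{41}\,\tubr{32}$, $\tubr{23}\,\tubr{14}$ consists of two consecutive pairs that are \emph{both} ascents or \emph{both} descents. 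Hence if the block lies to the right of $B$ and is internally decreasing, pairing $\tau_{q^*}\tau_{q^*+1}$ with any internal pair of the block yields an ascent--descent combination, and none of the length-$4$ patterns can fire; the end-patterns $\tubr{1}\,\tubr{32}$ and $\tubr{23}\,\tubr{1}$ do not rescue this in general (e.g.\ when $p^*>1$ and $q^*<n$). So the case analysis you flag as the ``main technical obstacle'' genuinely does not close with the ingredients you name.

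The paper handles this step by a different mechanism: working in the simple skeleton, it tracks the \emph{value} $a-1$ and shows its right neighbour must be $\le a$ (else a $\tubr{14}\,\tubr{23}$ or $\tubr{23}\,\tubr{14}$ appears against the ascent inside the inflated block), then iterates down through $a-2,\ldots,1$ to force all values below $a$ into a contiguous run immediately preceding $a$---contradicting simplicity. That iterative value-chasing is what your argument is missing; once you supply it (or its analogue for your quotient $\pi$), your approach goes through and has the pleasant feature of being self-contained, without the Albert--Atkinson reference.
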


\begin{proof}
    Every non-simple permutation is an inflation of a unique simple permutation (see \cite{AA05}). By Lemma \ref{lem: inflation}, we know that $\pi[\sigma] \in \mathcal{A}_{n}$. We will show that inflating any simple permutation $\pi \in \mathcal{A}_m$ by $\sigma \in \mathcal{A}_{n-m+1}$ containing a $\tubr{12}$ pattern, at any element that is not $1$, produces a permutation that is not in $\mathcal{A}_n$.

    Suppose we inflate by $\sigma$ at $a$ for $1 < a \leq n$, call this inflation $\pi'$. We wish to argue that $\pi' \not\in \mathcal{A}_n$. Consider $a-1$ in $\pi$. It must be the case that every element in $\pi$ to the right of $a-1$ up until we possibly reach $a$ is less than or equal to $a$. Indeed, the element directly to the right of $a-1$ must be at most $a$, otherwise we find a $\tubr{14} \, \tubr{23}$ or $\tubr{23}\, \tubr{14}$ pattern in $\pi'$. But now that the element directly to the right of $a-1$ is less than or equal to $a$, we can continue to apply the same logic to conclude that the element directly to its right is at most $a$ as well. This continues until we reach the end of $\pi$, or $a$.

    By identical reasoning, we can conclude that every element of $\pi$ to the right of $a-i$ up until we possibly reach $a$ is less than or equal to $a$ for any $1 \leq i \leq a-1$.

    One immediate consequence of this is that every element less than $a$ appears to the left of $a$ in $\pi$. If not, then the previous paragraphs force $\pi_n < a$. But then $\pi' \not \in \mathcal{A}_n$ since it would contain a $\tubr{23}\, \tubr{1}$ pattern.

    But if every element less than $a$ appears to the left of $a$ in $\pi$, and every element to the right of an element less than $a$, up until $a$, must also be less than $a$, this forces $\pi$ to have the form $ABaC$ where $B = \{1,\dots,a-1\}$ and $A \sqcup C = \{a+1,\dots,n\}$. However, if $1 < a < n$, $B \not= \emptyset$ and so $Ba$ is a nontrivial subinterval in $\pi$. If $a=n$, then we must have $B = \{1,\dots,n-1\}$ so that $C = \emptyset$, meaning $\pi_n = n$. In either case, we contradict the simplicity of $\pi$.

    So we cannot inflate $\pi$ by $\sigma$ containing $\tubr{12}$ at any $a$ with $1 < a \leq n$. Taking reflections, by Lemma \ref{lem: reflect}, and because the reflection of a simple permutation is still simple, we also cannot inflate by $\sigma$ containing $\tubr{21}$ at any $a$ for $1 < a \leq n$. Since every permutation contains one of those two patterns, and every permutation is an inflation of a simple permutation, we must inflate at $1$ and the result follows.
\end{proof}



We tried to enumerate $\mathcal{A}_n$ by first enumerating the simple permutations. Although this seems difficult, understanding the simple permutations could lead to improved lower, or maybe upper, bounds on the growth rate of $c_n$. As evidence for this, we include some results connecting the enumeration of the simple permutations to the enumeration of the permutations in $\mathcal{A}_n$ that follow from Lemma \ref{lem: infateAt1}.

\begin{theorem} \label{thm: simpleEnum}
    Let $s_n$ denotes the number of simple permutations in $\mathcal{A}_n$ and let $c_n = |\C_n(321)|$. Then for $n \geq 2$,
    \[
    a_n = \sum_{i=2}^n s_i a_{n-i+1},
    \]
    which implies that for $n \geq 2$,
    \[
    c_{n} = \sum_{i=2}^{n-1} s_i c_{n-i+1}.
    \]
\end{theorem}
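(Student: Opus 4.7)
The plan is to establish the first identity by exhibiting a size-preserving bijection
\[
\Phi \colon \bigsqcup_{m=2}^{n} \{\pi \in \mathcal{A}_m : \pi \text{ simple}\} \times \mathcal{A}_{n-m+1} \longrightarrow \mathcal{A}_n, \qquad (\pi,\sigma) \mapsto \pi[\sigma],
\]
and then derive the second identity from the first by invoking Corollary \ref{cor: 321-A}. Well-definedness of $\Phi$ is immediate from Lemma \ref{lem: inflation}: if $\pi \in \mathcal{A}_m$ and $\sigma \in \mathcal{A}_{n-m+1}$, then $\pi[\sigma] \in \mathcal{A}_{m + (n-m+1) - 1} = \mathcal{A}_n$. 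Once $\Phi$ is shown to be a bijection, counting the disjoint union yields $a_n = \sum_{m=2}^{n} s_m \, a_{n-m+1}$.

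For surjectivity I would split on whether $\tau \in \mathcal{A}_n$ is simple. When $\tau$ is simple, take $(\pi,\sigma) = (\tau, 1)$ in the $m = n$ slice: since $1 \in \mathcal{A}_1$ vacuously and $\tau[1] = \tau$, this produces a preimage. When $\tau$ is non-simple, Lemma \ref{lem: infateAt1} supplies a unique simple $\pi \in \mathcal{A}_m$ with $2 \leq m \leq n-1$ and a unique $\sigma \in \mathcal{A}_{n-m+1}$ with $\tau = \pi[\sigma]$. For injectivity, suppose $\pi_1[\sigma_1] = \pi_2[\sigma_2] = \tau$. If $\tau$ is simple then neither inflation can be nontrivial (otherwise $\tau$ would contain a proper interval of size $\geq 2$), forcing $\sigma_1 = \sigma_2 = 1$ and $\pi_1 = \pi_2 = \tau$. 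If $\tau$ is non-simple, uniqueness in Lemma \ref{lem: infateAt1} forces $\pi_1 = \pi_2$, and then each $\sigma_i$ is recovered from $\tau$ as the entries occupying the position of $1$ in $\pi_i$, so $\sigma_1 = \sigma_2$.

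The second identity then follows formally. Replacing $n$ by $n-1$ in the first gives $a_{n-1} = \sum_{i=2}^{n-1} s_i \, a_{n-i}$, and by Corollary \ref{cor: 321-A} we have $a_k = c_{k+1}$ for every $k \geq 1$. Since the summands satisfy $n-i \geq 1$, substituting $a_{n-1} = c_n$ on the left and $a_{n-i} = c_{n-i+1}$ on the right produces $c_n = \sum_{i=2}^{n-1} s_i \, c_{n-i+1}$.

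The main obstacle is the bookkeeping around the $\sigma = 1$ edge case. The $m = n$ slice of $\Phi$ must precisely pick up the simple permutations of $\mathcal{A}_n$ (paired with the trivial inflation), while the $2 \leq m \leq n-1$ slices must precisely exhaust the non-simple permutations; otherwise either a double count or an under count would occur. The convention that \emph{simple} means size at least $2$, which is implicit in the substitution decomposition of \cite{AA05} and hence in Lemma \ref{lem: infateAt1}, is exactly what prevents the identity $1[\tau] = \tau$ from producing a spurious second decomposition of every non-simple $\tau$.
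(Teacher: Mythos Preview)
Your argument is correct and follows exactly the paper's approach: the paper's proof simply states that the identity is an immediate consequence of Lemmas~\ref{lem: inflation} and~\ref{lem: infateAt1}, and you have fleshed out precisely that bijection, including the $\sigma=1$ bookkeeping and the index shift via Corollary~\ref{cor: 321-A}. Your write-up is in fact more careful than the paper's one-sentence justification.
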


\begin{proof}
    This is an immediate consequence of Lemmas \ref{lem: inflation} and \ref{lem: infateAt1}. Since every inflation yields a permutation in $\mathcal{A}_n$ by Lemma \ref{lem: inflation}, and every permutation in $\mathcal{A}_n$ is the inflation of a unique simple permutation, but there is only one way to inflate via Lemma \ref{lem: infateAt1}.
\end{proof}

We can recursively expand and get,

\begin{corollary} \label{cor: simpleComp}
    For $n \geq 2$,
    \[
    a_n = \sum_{x_1 + \cdots + x_k = n-1} s_{x_1+1}s_{x_2+1} \cdots s_{x_k+1}
    \]
    where we are summing over all compositions of $n-1$.
\end{corollary}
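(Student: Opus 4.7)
The plan is to deduce the composition formula directly from the recursion in Theorem \ref{thm: simpleEnum} by induction on $n$, after a clean change of summation variable. First I would record the base case: $\mathcal{A}_1$ consists of the single permutation $1$ (which trivially avoids all six vincular patterns), so $a_1 = 1$. This corresponds to the empty composition of $0$, where the empty product is $1$, matching the claimed formula for $n=1$ if we extend it, and more importantly serving as the base of the induction for $n \geq 2$.

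Next I would re-index the recursion from Theorem \ref{thm: simpleEnum}. Substituting $j = i-1$, so that $j$ ranges over $\{1, 2, \dots, n-1\}$,
\[
a_n \;=\; \sum_{i=2}^{n} s_i\, a_{n-i+1} \;=\; \sum_{j=1}^{n-1} s_{j+1}\, a_{n-j}.
\]
This rewrites the recursion in precisely the shape needed, where the index $j$ will play the role of the first part of a composition of $n-1$.

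Now I would proceed by strong induction on $n$. Assume the formula holds for all $m$ with $2 \leq m < n$, and also note that for $m = 1$ the formula yields the empty product $1 = a_1$. Apply the rewritten recursion to peel off the first factor:
\[
a_n \;=\; \sum_{j_1 = 1}^{n-1} s_{j_1 + 1}\, a_{n - j_1}.
\]
For each term, either $n - j_1 = 1$, contributing $s_{j_1+1}$ (the composition $(j_1)$ of $n-1$), or $n-j_1 \geq 2$, in which case the inductive hypothesis expresses $a_{n-j_1}$ as a sum over compositions $(j_2, \dots, j_k)$ of $(n - j_1) - 1 = n - 1 - j_1$ of the product $s_{j_2+1}\cdots s_{j_k+1}$. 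Combining these yields a sum over all tuples $(j_1, j_2, \dots, j_k)$ of positive integers with $j_1 + j_2 + \cdots + j_k = n-1$, which is exactly the set of compositions of $n-1$, of the product $s_{j_1+1} s_{j_2+1} \cdots s_{j_k+1}$.

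There is no real obstacle here; the argument is a routine unrolling of a recursion of the form $a_n = \sum_{j} s_{j+1} a_{n-j}$ into its associated ordered factorization. The only thing to be careful about is the bookkeeping at the boundary (the $a_1$ term and the empty-product convention), which is handled by making $a_1 = 1$ explicit before starting the induction.
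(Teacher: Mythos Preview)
Your proof is correct. The paper introduces the corollary with the phrase ``We can recursively expand and get,'' and your induction is precisely that recursive expansion of Theorem~\ref{thm: simpleEnum} carried out carefully, including the boundary check $a_1 = 1$. The proof actually written in the paper takes a different, bijective route: it invokes Lemmas~\ref{lem: inflation} and~\ref{lem: infateAt1} directly to observe that every $\pi \in \mathcal{A}_n$ arises from iteratively inflating the permutation $1$ by a sequence of simple permutations, and then reads off the size constraint $|s_1| + \cdots + |s_k| = n + k - 1$ (equivalently $\sum (|s_i|-1) = n-1$). Your approach and the paper's implied primary argument coincide; the paper's written alternative trades the algebraic unrolling for a one-line combinatorial interpretation, at the cost of re-invoking the structural lemmas rather than just the recursion.
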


\begin{proof}
    An alternative proof is that Lemmas \ref{lem: inflation} and \ref{lem: infateAt1} show that every $\pi \in \mathcal{A}_n$ can be obtained by iteratively inflating the permutation $1$ by simple permutations $s_i$ such that $|s_{1}| + |s_2| - 1 + \cdots + |s_k| - 1 = n$. So we want $|s_1| + \cdots + |s_k| = n+k-1$. This precisely recovers the result. 
\end{proof}

The following result illustrates how one could approach building new simple permutations from smaller ones, and is useful for obtaining a lower bound on the growth rate of $c_n$,

 \begin{lemma} \label{lem: simple(n-2)}
    $s_n \geq 4 s_{n-2}$ for $n \geq 5$.
\end{lemma}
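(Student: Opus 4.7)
My plan is to construct an injection $\mathcal{A}^{\mathrm{simple}}_{n-2} \times \{1,2,3,4\} \hookrightarrow \mathcal{A}^{\mathrm{simple}}_n$ by specifying four ways to extend a given simple $\pi \in \mathcal{A}_{n-2}$ into a simple permutation of length $n$ that still lies in $\mathcal{A}_n$. The natural strategy is to introduce a new maximum $n$ and a new minimum $1$, shifting the entries of $\pi$ up by $1$ so as to free these values, and to choose four distinct pairs of positions at which to place them. Not every placement will do: putting $n$ first and $1$ last leaves the shifted $\pi$ sitting on positions $\{2,\dots,n-1\}$ with values $\{2,\dots,n-1\}$, which is a non-trivial interval and destroys simplicity. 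So at least one of $1, n$ must be inserted strictly inside the shifted $\pi$. A plausible candidate family has $n$ placed at position $1$ or $2$ and $1$ placed at position $n-1$ or $n$, giving four combinations compatible with the start/end constraints from the preceding lemma.

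For each candidate extension $\tilde\pi$, I would verify two things. First, $\tilde\pi \in \mathcal{A}_n$: since $\pi$ already avoids the six vincular patterns of Corollary \ref{cor: 321-A}, any new occurrence must involve $1$ or $n$, and the extremal placement of these entries is engineered to rule out their participation in any of $\tubr{32}\,\tubr{41}$, $\tubr{14}\,\tubr{23}$, $\tubr{41}\,\tubr{32}$, $\tubr{23}\,\tubr{14}$, $\tubr{23}\,\tubr{1}$, $\tubr{1}\,\tubr{32}$. Second, $\tilde\pi$ is simple: any non-trivial interval in $\tilde\pi$ missing both new entries would descend to a non-trivial interval in the shifted $\pi$, contradicting its simplicity; an interval containing $1$ or $n$ must, since these are the unique global extremes, also contain the other, and the chosen positions prevent any such interval from forming.

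Distinctness across the four extensions is immediate from their different insertion positions, and injectivity in $\pi$ is recovered by deleting $1$ and $n$ from $\tilde\pi$ and renormalizing, yielding $s_n \geq 4s_{n-2}$. I expect the main obstacle to be pinning down four placements that simultaneously preserve simplicity and membership in $\mathcal{A}_n$. Simplicity is brittle---any run of consecutive positions whose values form a consecutive set of integers breaks it---and the six vincular patterns tightly constrain where the extremes can sit (in particular, the preceding lemma already eliminates some start/end configurations). Checking all four candidates thus requires a careful case analysis that leans on both the simplicity and the avoidance properties of $\pi$, and the threshold $n \geq 5$ is presumably dictated by small edge cases in which the candidate constructions collide or become degenerate.
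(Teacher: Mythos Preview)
Your overall plan---build four injections from simple permutations in $\mathcal{A}_{n-2}$ to simple permutations in $\mathcal{A}_n$---is exactly the shape of the paper's argument, but the four specific placements you propose all fail, so the proof as written does not go through. Placing $1$ in the last position is fatal: in your construction (c), $\pi'_1 n$ is already an ascent, so $\tubr{\pi'_1 n}\ldots\tubr{1}$ is a $\tubr{23}\,\tubr{1}$ pattern for every $\pi$. You already observed that (a) (with $n$ first and $1$ last) is not simple, so at this point only two candidates remain. For (b), take the simple $\pi = 246135 \in \mathcal{A}_6$; shifting and inserting gives $8\,3\,5\,7\,2\,4\,1\,6$, and $\tubr{35}\ldots\tubr{16}$ is a $\tubr{23}\,\tubr{14}$ pattern. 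For (d), take $\pi = 2413 \in \mathcal{A}_4$; you obtain $3\,6\,5\,2\,1\,4$, and $\tubr{3}\ldots\tubr{65}$ is a $\tubr{1}\,\tubr{32}$ pattern. So none of the four placements is safe, and the difficulty you flagged as ``the main obstacle'' is genuinely unresolved.

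The paper's construction avoids these traps by making two different choices. First, it inserts the two new \emph{largest} values $n-1$ and $n$ rather than a new minimum and maximum, and it places $n$ immediately to the right of the old maximum $n-2$---so the insertion position depends on $\pi$, not on a fixed slot like ``position $1$ or $2$''---while $n-1$ goes at the end. This single recipe gives one simple element of $\mathcal{A}_n$; its reverse (Lemma~\ref{lem: reflect}) gives a second. The remaining two do not come from a second pair of insertion positions at all: instead the paper takes the first output, reverses separately the block to the left of $n$ and the block strictly between $n$ and $n-1$, checks that the result is again simple and in $\mathcal{A}_n$, and reverses once more. Distinguishing the four images uses which of the endpoints equals $n-1$ and whether the other endpoint equals $n-2$, which is also where the hypothesis $n\ge 5$ enters. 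If you want to salvage your min/max idea, you would at minimum need placements that depend on $\pi$ rather than on absolute positions.
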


\begin{proof}
    Given a permutation $\pi \in A_{n-2}$ that is simple, we can form a new permutation $\pi'$ by adding $n$ directly to the right of $n-2$ and $n-1$ at the end of the permutation. We claim $\pi'$ is still simple and in $\mathcal{A}_n$.

    First, $\pi' \in \mathcal{A}_n$ since if $n$ plays the role of a $4$ in any of the vincular patterns of length $4$ from Corollary \ref{cor: 321-A}, $n-2$ would play this role in $\pi$. We avoid $\tubr{23}\, \tubr{1}$ since $\pi'$ ends with $n-1$ and $\pi'$ avoids $\tubr{1} \, \tubr{32}$ because $n-2 < n$ and if $n$ played the role of the $3$, $n-2$ would in $\pi$.

    Since $\pi' \in \mathcal{A}_n$, to show $\pi'$ is simple by Lemma \ref{lem: infateAt1}, it suffices to argue that $\pi'$ does not contain a subinterval containing $1$. First, since $\pi$ is simple, $\pi_{n-2} \not= n-2$ and $\pi_1 \not= n-2$, so $\pi'$ contains $\pi_1 \cdots (n-2)n \cdots \pi_{n-2} (n-1)$. If there were now a non-trivial subinterval containing $1$, it would have to be among the elements before $n-2$ or after $n$, but that implies $\pi$ contains a nontrivial subinterval.

    This implies that $s_n \geq s_{n-2}$, but by Lemma \ref{lem: reflect}, we can reflect all of these new simple permutations to obtain $2s_{n-2}$ permutations. They are distinct because the reflections begin with $n-1$.

    We further claim that for each of these permutations $\pi_1 \cdots \pi_{k-1}(n-2) n \pi_{k+1} \cdots \pi_{n-2} (n-1)$ that the permutation $\pi' = (n-2) \pi_{k-1} \cdots \pi_1 n \pi_{n-2} \pi_{n-3} \cdots \pi_{k+1} (n-1) \in \mathcal{A}_n$. We note that to get distinct permutations, we need $n-2 \geq 3$, i.e. $n \geq 5$. First, notice this permutation is distinct from all the others we have created because it begins with $n-2$.

    $\pi'$ still avoids $\tubr{1} \, \tubr{32}$ because $\pi'_1 = n-2$ by construction since we placed $n$ to the right of $n-2$. It avoids $\tubr{23}\, \tubr{1}$ because $\pi'_n = n-1$. 
    
    For any of the patterns of size $4$ in Corollary \ref{cor: 321-A}, were they to occur in $\pi'$ it would have to be between some elements to the right of $n$ and some to the left, otherwise $\pi$ would contain the reverse of that pattern, which is another pattern $\pi$ avoids. Furthermore, it must involve $n$ or $n-1$ because if one part of the vincular pattern appears to the left of $n$ and the other part appears to the right, $\pi$ contains the reversal of each of these patterns, which is again a pattern that $\pi$ avoids. For example, if $\pi_k \cdots \pi_1$ contains the $\tubr{14}$ and $\pi_{n-1} \cdots \pi_{k+1}$ contains the $\tubr{23}$ in a $\tubr{14} \, \tubr{23}$ pattern, then $\pi$ would contain a $\tubr{41} \, \tubr{32}$ pattern which is a contradiction. 
    
    With this in mind, $\pi'$ avoids $\tubr{32} \, \tubr{41}$ because if $n$ plays the role of a $4$ here we find a $\tubr{23}\, \tubr{14}$ pattern in $\pi$ where now $n-1$ plays the role of the $4$ in the reversal. Similar logic shows we cannot have a $\tubr{41} \, \tubr{32}$.

    We cannot create a $\tubr{14} \, \tubr{23}$ where $n$ plays the role of $4$, because this implies the existence of a $\tubr{1} \, \tubr{32}$ pattern in $\pi$. Similar logic shows we cannot create a $\tubr{41} \, \tubr{32}$.

    This covers all the possible patterns from Corollary \ref{cor: 321-A}, showing that $\pi' \in \mathcal{A}_n$. Suppose for sake of contradiction that $\pi'$ is not simple, by Lemma \ref{lem: infateAt1}, it contains a subinterval $[1,j]$ for $j < n$. This must be to the left or right of $n$, but since $\pi'$ and $\pi$ differ my reflecting all the terms to the left of $n$ and all the terms between $n$ and $n-1$, this means that $\pi$ contains the same subinterval. This contradicts the simplicity of $\pi$.

    We can also reverse all of these permutations to obtain a total of $4 s_{n-2}$ simple permutations in $\mathcal{A}_n$. They will all be distinct because the first $2 s_{n-2}$ ended or started with $n-1$, but did not start or end with $n-2$. The other $2 s_{n-2}$ permutations we produced all start or end with $n-1$ and end or start with $n-2$.
\end{proof}

\begin{example}
    As an example of the above, consider the simple permutations in $A_6$: \sloppy $246135, 314625, 362514, 413625, 415263, 426315, 513624, 526314, 526413, 531642$. We can construct $4$ of them from the two simple permutations in $A_4$, $2413, 3142$:
    \[
    2413 \to 24{\bf 6} 13 {\bf 5}, {\bf 5} 31 {\bf 6} 42 \qquad \qquad 3142 \to 314{\bf 6}2 {\bf 5}, {\bf 5} 2 {\bf 6} 413.
    \]
    We can then reverse the pieces between $6$ and $5$ to obtain four more permutations:
    \[
    42{\bf 6} 31 {\bf 5}, {\bf 5} 13 {\bf 6} 24 \qquad \qquad 413{\bf 6}2 {\bf 5}, {\bf 5} 2 {\bf 6}314
    \]
    We only miss
    \[
    362514, 415263.
    \]
\end{example}

\section{Growth rate results} \label{sec:bounds}
In this section we explore some of the growth rate results about $c_n$ that stem from the results in the previous sections.

Lemma \ref{lem: simple(n-2)} implies that $s_n$ has a growth rate of at least $2$ for $n \geq 7$, but from explicitly calculating the number of simple permutations, it seems much higher than that. The simple permutations also seem to exhibit a larger growth rate sooner than the $c_n$. For example $\frac{s_{23}}{s_{22}} \approx 3.44037$ while $\frac{c_{23}}{c_{22}} \approx 3.37576$. We include a table of the values of $s_n$ for $n = 2,\dots,23$ calculated with Sage \cite{Sage} and for large values of $n$ via Theorem \ref{thm: simpleEnum} from the known values of $c_n$ in \cite[A309508]{OEIS}, since $s_n = c_{n+1} - \sum_{i=2}^{n-1} s_i c_{n-i+2}$:

\begin{table}[H] \label{tab: simples}
\begin{center}
\begin{tabular}{c|c||c|c||c|c||c|c}
    $n$ & $s_n$ & $n$ & $s_n$ & $n$ & $s_n$ & $n$ & $s_n$ \\
    \hline
    2  & 2       & 8  & 56       & 14 & 35,950     & 20 & 47,592,074 \\
    3  & 0       & 9  & 94       & 15 & 112,834    & 21 & 161,888,174 \\
    4  & 2       & 10 & 406      & 16 & 378,818    & 22 & 555,182,652 \\
    5  & 0       & 11 & 1,000     & 17 & 1,240,626   & 23 & 1,910,032,910 \\
    6  & 10      & 12 & 3,656     & 18 & 4,180,576   &    &           \\
    7  & 6       & 13 & 10,478    & 19 & 14,003,702  &    &           \\
\end{tabular}
 \caption{Number of simple permutations in $\mathcal{A}_n$}
  \label{fig:simples}
\end{center}
\end{table}

An immediate consequence of Theorem \ref{thm: simpleEnum} and Lemma \ref{lem: simple(n-2)} is that
\begin{align*}
c_n &\geq 2 c_{n-1} + 2 c_{n-3} + 10 c_{n-5} + 6 c_{n-6} + 56 c_{n-7} + 94 c_{n-8}+ 406 c_{n-9}+ 1000 c_{n-10}\\
&  + 3656 c_{n-11} + 10478 c_{n-12} + 35950 c_{n-13} + 112834 c_{n-14} + 378818 c_{n-15}\\
& +1240626 c_{n-16} +4180576 c_{n-17} + 14003702 c_{n-18} + 47592074 c_{n-19}  + 161888174 c_{n-20} \\
&+555182652 c_{n-21} + 1910032910 c_{n-22} + 555182652 \sum_{k=23}^{\infty} 2^{k-21}c_{n-k} 
\end{align*}

From Table \ref{tab: simples}, we know that $s_{23} \geq 2 s_{22}$ so we can use Lemma \ref{lem: simple(n-2)} to conclude that $s_{k+1} \geq 2^{k-21} s_{22}$ for $k \geq 23$. An immediate corollary,

\begin{corollary}
    We have $\lim_{n \to \infty} \sqrt[n]{c_n} \geq 3.14101$.
\end{corollary}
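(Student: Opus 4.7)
The plan is to extract a characteristic equation from the displayed recurrence inequality for $c_n$ and identify its largest root. Collect the coefficients as $c_n \geq \sum_{k \geq 1} \alpha_k c_{n-k}$, with $\alpha_1 = 2$, $\alpha_3 = 2$, $\alpha_5 = 10$, $\dots$, $\alpha_{22} = 1910032910$ (missing indices equal to zero), and $\alpha_k = 555182652 \cdot 2^{k-21}$ for $k \geq 23$. Because the tail coefficients grow like $2^k$, the function $f(\rho) := \sum_{k \geq 1} \alpha_k \rho^{-k}$ converges on $(2, \infty)$, and there it is continuous and strictly decreasing with range $(0, \infty)$; hence the equation $f(\rho) = 1$ admits a unique solution $\rho_0 > 2$.

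The main step is to show $L := \lim_{n \to \infty} \sqrt[n]{c_n} \geq \rho_0$ by a truncation-and-comparison argument. Fix $K \geq 22$ and define $d_n$ to satisfy equality in the truncated recurrence $d_n = \sum_{k=1}^{K} \alpha_k d_{n-k}$, with positive initial values $d_i \leq c_i$ for $1 \leq i \leq K$. Induction gives $c_n \geq d_n$ for all $n$. Standard linear-recurrence theory for nonnegative coefficients and positive initial data yields $d_n \sim C_K \rho_K^n$ with $C_K > 0$, where $\rho_K$ is the dominant positive root of $x^K = \sum_{k=1}^{K} \alpha_k x^{K-k}$. Hence $L \geq \rho_K$, and letting $K \to \infty$, with $\rho_K$ increasing to $\rho_0$ by continuity of $f$, gives $L \geq \rho_0$.

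It then remains to verify numerically that $\rho_0 \geq 3.14101$, equivalently $f(3.14101) \leq 1$. The first twenty-two summands are evaluated directly from the tabulated $\alpha_k$, and the tail $555182652 \sum_{k \geq 23} 2^{k-21} \rho^{-k}$ is a geometric series that at $\rho = 3.14101$ sums in closed form to $555182652 \cdot (2/\rho)^{23}/[2^{21}(1 - 2/\rho)]$. Adding the two contributions and confirming the total is at most $1$ closes the argument.

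The main obstacle is the second step. Fekete's lemma (used earlier in the paper) gives $\sqrt[n]{c_n} \to L$ but not $c_n \asymp L^n$ pointwise, so one cannot substitute $c_n = L^n$ directly into the recurrence inequality. The truncation-plus-Perron-comparison above is the standard device for upgrading $\sqrt[n]{\cdot}$ convergence into a spectral lower bound; everything else reduces to routine numerical computation.
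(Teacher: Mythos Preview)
Your overall strategy is exactly the standard one (and is what the paper's one-line ``immediate corollary'' is implicitly invoking): pass from the displayed recurrence inequality to the unique root $\rho_0>2$ of $f(\rho)=\sum_{k} \alpha_k \rho^{-k}=1$ and conclude $L\geq \rho_0$ via comparison with truncated linear recurrences. That part is sound; indeed the convex-combination identity $d_n/\rho_K^{\,n} = \sum_{k=1}^K (\alpha_k/\rho_K^{\,k})\,(d_{n-k}/\rho_K^{\,n-k})$, with nonnegative weights summing to $1$, already sandwiches $d_n/\rho_K^{\,n}$ between the smallest and largest initial ratios, so $\sqrt[n]{d_n}\to\rho_K$ without needing the full asymptotic $d_n\sim C_K\rho_K^{\,n}$.

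There is, however, a genuine slip in the final numerical step. Since $f$ is strictly decreasing with $f(\rho_0)=1$, the equivalence runs the other way:
\[
\rho_0 \geq 3.14101 \quad\Longleftrightarrow\quad f(3.14101) \geq 1,
\]
not $f(3.14101)\leq 1$. Verifying that the sum of the twenty-two explicit terms plus the geometric tail is \emph{at most} $1$ would only show $\rho_0 \leq 3.14101$, which is the wrong direction and proves nothing about $L$. You must instead confirm that the total is \emph{at least} $1$. With that inequality reversed, the argument closes.
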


We note that this is a slightly worse lower bound than the result in \cite{AGL25} of $3.17$. We include it because from this perspective there are two ways to improve the lower bound on the growth rate of $c_n$. First, one could prove better growth rate results about $s_n$, which seems achievable (e.g., Lemma \ref{lem: simple(n-2)}). For instance, we conjecture,

\begin{conjecture}
    For $n \geq 12$, $s_n \geq 9 s_{n-2}$.
\end{conjecture}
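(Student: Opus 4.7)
The natural attack is to extend the construction used in the proof of Lemma \ref{lem: simple(n-2)}. There one started with a simple $\pi \in A_{n-2}$ and built four distinct simple permutations in $A_n$ by inserting $n$ adjacent to $n-2$, placing $n-1$ at either endpoint, and optionally reversing the block between them. The plan is to systematically enlarge this catalog of insertion-and-modification operations so that at least nine essentially different constructions can be carried out from each simple permutation of length $n-2$.

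The first step is to identify candidate positions for inserting the two new maximal elements. Beyond the placements used in Lemma \ref{lem: simple(n-2)}, promising options include inserting $n$ adjacent to other structurally significant entries (for instance, $n-3$, or the element immediately preceding $n-2$ in $\pi$), and placing $n-1$ adjacent to $1$ or immediately next to $n$ rather than at an endpoint. For each candidate construction two verifications are needed: first, that the resulting permutation still avoids the six vincular patterns characterizing $A_n$, which reduces to a finite case analysis over the possible roles $n-1$ and $n$ can play in each forbidden pattern (very much in the spirit of Lemma \ref{lem: simple(n-2)}); second, that the resulting permutation is still simple, which by Lemma \ref{lem: infateAt1} reduces to showing that no nontrivial subinterval containing $1$ is newly created. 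Distinctness of the nine constructions can then be certified by exhibiting an invariant that differs across them, such as the first entry, the last entry, the position of $n$ relative to $n-2$, or the shape of the two or three entries immediately adjacent to $n-1$.

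The threshold $n \geq 12$ is plausibly forced for two reasons. Some of the new constructions require $\pi$ to contain enough "room" between specific landmark elements (e.g., between $1$ and $n-2$, or between $n-3$ and $n-2$) which only becomes available when $\pi$ is large enough that neither of these pairs is forced into adjacency; and the full count of nine may only be achieved after setting aside a handful of small exceptional $n$ that are handled separately using the tabulated values of $s_n$ in Table \ref{tab: simples}.

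The main obstacle will be controlling simplicity under the generalized constructions. The argument in Lemma \ref{lem: simple(n-2)} succeeded because $n-2$ placed adjacent to $n$ cannot lie in any proper consecutive-value interval, and because $n-1$ at an endpoint is automatically isolated; but once $n-1$ is moved into the interior or $n$ is moved away from $n-2$, the potential for new nontrivial intervals becomes delicate and may depend on local features of $\pi$ that simplicity alone does not control. A promising way to sidestep this is to classify simple permutations in $A_{n-2}$ by their "top skeleton," namely the relative positions of $n-2, n-3$ and $1$, and prove the multiplier $9$ case-by-case across skeleton types. Should a uniform constructive proof resist this approach, a hybrid strategy combining a weaker constructive bound $s_n \geq c \, s_{n-2}$ with an asymptotic argument drawn from Corollary \ref{cor: simpleComp} and the generating-function identity it yields may be needed to close the remaining gap.
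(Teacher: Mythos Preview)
The statement you are addressing is a \emph{conjecture} in the paper, not a theorem: the paper offers no proof, only the remark that a proof would push the lower bound on $\lim_{n\to\infty}\sqrt[n]{c_n}$ past $3.232$. So there is no ``paper's own proof'' to compare against.

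Your proposal is not a proof either; it is an outline of a strategy, and you acknowledge this yourself (``The plan is\ldots'', ``promising options include\ldots'', ``The main obstacle will be\ldots'', ``a hybrid strategy\ldots may be needed''). As a strategy it is reasonable and in the spirit of Lemma~\ref{lem: simple(n-2)}, but nothing in it has been carried out. In particular, you have not exhibited even one additional construction beyond the four from Lemma~\ref{lem: simple(n-2)}, let alone nine, and the obstacle you correctly identify---controlling simplicity once $n$ is moved away from $n-2$ or $n-1$ is moved off an endpoint---is exactly where the argument would need genuine new content. The suggestion to classify by ``top skeleton'' or to fall back on a generating-function argument from Corollary~\ref{cor: simpleComp} are both plausible directions, but neither is developed here. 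As written, this is a research plan for an open problem, not a proof.
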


If one could prove this conjecture, without computing any other $s_n$, this would imply that $\lim_{n \to \infty} \sqrt[n]{c_n} \geq 3.232$, which is an improvement on the lower bound from \cite{AGL25}. But one could also combine improved $s_n$ growth rate results with calculating more of the $s_n$ (or $c_n$) to further improve the lower bound.

\subsection{Upper Bound on $c_n$}
Much of the work thus far concerning $c_n$ has involved finding lower bounds. One of the other advantages of Theorem \ref{thm: simpleEnum} is it makes it possible to study upper bound questions via inductive arguments. As one example, we can address the upper bound of \cite[Conjecture 5.2]{BC19}, where the authors conjecture for $n \geq 2$,
\[
c_n(321) \leq 4 c_{n-1}(321).
\]
We cannot quite prove this conjecture, but we can prove it up to an assumption that seems very reasonably true. This result also gives a way to connect local growth to global growth rates.

\begin{theorem} \label{thm: upperBound}
    If $\limsup_{n \to \infty} \sqrt[n]{s_n} \leq 3.99$ then $c_n \leq 4 c_{n-1}$ for $n \geq 2$.
\end{theorem}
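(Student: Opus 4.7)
The plan is to combine the recurrence $c_n = \sum_{i=2}^{n-1} s_i c_{n-i+1}$ from Theorem \ref{thm: simpleEnum} with the supermultiplicativity $c_a c_b \le c_{a+b}$ proved in Section \ref{sec: convergence}. Since $s_2 = 2$, the desired bound $c_n \le 4 c_{n-1}$ is equivalent to $\sum_{i=3}^{n-1} s_i \, c_{n-i+1}/c_{n-1} \le 2$. Applying supermultiplicativity with $a = n-i+1$ and $b = i-2$ (so $a+b = n-1$) gives $c_{n-i+1}/c_{n-1} \le 1/c_{i-2}$ for $3 \le i \le n-1$, which reduces the problem to the single $n$-independent inequality
\[
\sum_{i=3}^{\infty} \frac{s_i}{c_{i-2}} \le 2.
\]
Establishing this would yield $c_n \le 4 c_{n-1}$ for every $n \ge 2$ simultaneously, dispensing with induction on $n$ altogether.

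To verify this sum, I would split at a cutoff $N$ chosen large enough to exhaust Table \ref{tab: simples}. The head $\sum_{i=3}^{N} s_i/c_{i-2}$ is a finite numerical quantity, computed from the tabulated $s_i$ together with the $c_i$ available in OEIS A309508 (or regenerated via Theorem \ref{thm: simpleEnum}); the goal is to show the head is at most $2-\delta$ for an explicit $\delta > 0$. For the tail, the hypothesis $\limsup \sqrt[n]{s_n} \le 3.99$ gives $s_i \le (3.99+\varepsilon)^i$ for $i$ sufficiently large, so provided one has a matching lower bound $c_i \ge K \rho^i$ with $\rho > 3.99$, the tail is dominated by a geometric series with ratio below $1$ and can be driven below $\delta$ by taking $N$ large.

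The main technical obstacle is producing such a lower bound on $c_i$ with $\rho > 3.99$, since the best unconditional lower bound from \cite{AGL25} only gives $\lim \sqrt[n]{c_n} \ge 3.17$. The hypothesis itself must be made to do this work. From the $i=n$ term of the recurrence one reads off $c_{n+1} \ge s_n c_2 = s_n$, which upgrades to $\lim \sqrt[n]{c_n} \ge \limsup \sqrt[n]{s_n}$, and the strict gap between $3.99$ and $4$ built into the hypothesis is there precisely to leave room—via supermultiplicativity and the recurrence applied to itself—to bootstrap a lower bound $c_i \ge K \rho^i$ with some $\rho$ strictly greater than $3.99$. Executing this bootstrap cleanly, and pinning down explicit values of $N$ and $\delta$ for which the numerical head plus the geometric tail stays within the budget of $2$, is the delicate step; the rest is bookkeeping.
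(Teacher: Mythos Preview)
Your reduction via supermultiplicativity is clean, but the resulting inequality
\[
\sum_{i\ge 3}\frac{s_i}{c_{i-2}}\le 2
\]
is simply false, so the whole scheme collapses at the numerical ``head'' before you ever reach the tail. From Table~\ref{tab: simples} we have $s_4=2$ and $s_6=10$, while $c_2=1$ and $c_4=4$; hence already
\[
\frac{s_4}{c_2}+\frac{s_6}{c_4}=2+\tfrac{10}{4}=4.5>2.
\]
The loss comes from the supermultiplicative step: for small $i$ the bound $c_{n-i+1}/c_{n-1}\le 1/c_{i-2}$ replaces what is essentially $L^{-(i-2)}$ (with $L$ close to $4$) by $1/c_{i-2}$, and $c_2=1$, $c_4=4$ are far smaller than $L^2\approx 16$, $L^4\approx 256$. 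So the approach of bounding the whole family of inequalities by a single $n$-independent series throws away exactly the information that makes the early terms manageable.

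The tail argument has an independent problem: you need $c_i\ge K\rho^i$ with $\rho>3.99$, but the hypothesis is an \emph{upper} bound on the growth of $s_n$, not a lower bound, and the inequality $c_{n+1}\ge s_n$ only transfers a lower bound on $s$-growth to $c$-growth. Nothing in the hypothesis or in the paper forces $\lim\sqrt[n]{c_n}>3.99$, so the proposed bootstrap has no starting point. By contrast, the paper's proof runs strong induction on $n$: assuming $c_k\le 4c_{k-1}$ for $k<n$ but $c_n>4c_{n-1}$, it rewrites $4c_{n-1}-c_n$ via Theorem~\ref{thm: simpleEnum} as $-s_{n-1}+\sum_{i=2}^{n-2}s_{n-i}(4c_i-c_{i+1})$ and uses the (now known) positivity of each $4c_i-c_{i+1}$, together with explicit small values, to force a linear recurrence lower bound on $s_{n-1}$ whose characteristic root exceeds $3.99$, contradicting the hypothesis. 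The induction is essential precisely because it lets you use the exact differences $4c_i-c_{i+1}$ rather than the crude ratios your supermultiplicativity step produces.
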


\begin{proof}
    We will prove that $c_n \leq 4 c_{n-1}$ by strong induction on $n$. This is clearly the case when $n = 2,\dots,17$, we can verify this explicitly (see \cite[A309508]{OEIS}). For some $n \geq 15$, assume $c_k \leq 4 c_{k-1}$ for all $k < n$, we will now argue that $c_{n} \leq 4 c_{n-1}$. By Theorem \ref{thm: simpleEnum},
    \[
    4c_{n-1} - c_n = -s_{n-1} + \sum_{i=2}^{n-2} s_{n-i} (4c_i - c_{i+1}).
    \]
    By induction each of the $(4c_i - c_{i+1})$ terms are positive. Our claim is equivalent to this difference being positive, meaning
    \[
    \sum_{i=2}^{n-2} s_{n-i} (4c_i - c_{i+1}) \geq s_{n-1}.
    \]
    Assume for sake of contradiction that this is not the case. Then $\sum_{i=2}^{n-2} s_{n-i} (4c_i - c_{i+1}) \leq s_{n-1}.$ However, since $n \geq 17$ this implies
    \begin{align*}
    s_{n-1} &\geq 2s_{n-2} + 4s_{n-3} + 6s_{n-4} + 16 s_{n-5} + 30 s_{n-6} + 86 s_{n-7} + 200 s_{n-8} + 562 s_{n-9}\\
    &+ 1498 s_{n-10} + 4316 s_{n-11} + 12224 s_{n-12} + 36414 s_{n-13} + 108042 s_{n-14} + 329262 s_{n-15}
    \end{align*}
    where we are removing terms we know are positive by our inductive assumption. This means that $\limsup_{n \to \infty} \sqrt[n]{s_n} \geq 3.99088$. This contradicts our assumption and so we conclude $4 c_{n-1} - c_n \geq 0$. The result follows by induction.
\end{proof}

\sloppy Based on Corollary \ref{cor: simpleComp}, it does not seem unreasonable to us that one could show $\limsup_{n \to \infty} \sqrt[n]{s_n} \leq 3.99$. It is also possible that if one calculated more of the $c_n$ terms, one could adapt the proof to push the growth rate of $s_n$ above $4$, which would be a known contradiction since the growth rate of the $c_n$ terms is at most $4$. We address some of these ideas in the next section.

\section{Future Work}

There are many questions one could ask. Main among them is if one could explicitly count the number of permutations of $\S_n$ avoiding the set of vincular patterns from Theorem \ref{thm: 321-final} or Corollary \ref{cor: 321-A}, which is equivalent to counting $\C_n(321)$.

\begin{question}
    What is $|\S_{n}(\tubr{32}\, \tubr{41}, \tubr{14}\,\tubr{23}, \tubr{41}\,\tubr{32}, \tubr{23}\,\tubr{14}, \tubr{23} \, \tubr{1},\tubr{1} \, \tubr{32})|$?
\end{question}

These patterns seems nice, they are invariant up to reversals and are size at most four. It would even be interesting to enumerate subsets of these patterns, like $|\S_{n}(\tubr{32}\, \tubr{41}, \tubr{41}\,\tubr{32}, \tubr{23} \, \tubr{1},\tubr{1} \, \tubr{32})|$. This would provide upper bounds for the growth rate of $c_n$.

One could also strengthen the avoidance conditions by removing the vincular conditions on some of the patterns as we did in Corollary \ref{cor: 132-231} to obtain lower bounds on $c_n$. Based on Theorem \ref{thm: simpleEnum} and Corollary \ref{cor: simpleComp}, we could also ask for a more explicit connection between the growth rates of $s_n$ and $a_n$.

\begin{question}
    What is the $1 \leq M < 2$ so that $\lim_{n \to \infty}\sqrt[n]{a_n} = M \limsup_{n \to \infty} \sqrt[n]{s_n}$?
\end{question}

With this, one could prove explicit upper and lower bounds on the growth rate of $c_n$ by studying $s_n$. We also know that $\lim_{n \to \infty} \sqrt[n]{a_n} \leq 4$. So if one could find an $M > 1$ for which this is true, possibly from Corollary \ref{cor: simpleComp}, one could prove \cite[Conjecture 5.2]{BC19} for $321$ via Theorem \ref{thm: upperBound}. On a related note, if one could prove that $s_n \leq 3.97 s_{n-1}$ for $n \geq 21$, then
\begin{align*}
c_n &\leq 2 c_{n-1} + 2 c_{n-3} + 10 c_{n-5} + 6 c_{n-6} + 56 c_{n-7} + 94 c_{n-8}+ 406 c_{n-9}+ 1000 c_{n-10}\\
&  + 3656 c_{n-11} + 10478 c_{n-12} + 35950 c_{n-13} + 112834 c_{n-14} + 378818 c_{n-15}\\
& +1240626 c_{n-16} +4180576 c_{n-17} + 14003702 c_{n-18} + 47592074 c_{n-19}  + 161888174 c_{n-20} \\
&+555182652 c_{n-21} + 1910032910 c_{n-22} + 555182652 \sum_{k=23}^{\infty} (3.97)^{k-21}c_{n-k} 
\end{align*}

This would imply that $\lim_{n \to \infty} \sqrt[n]{c_n} \leq 3.97132$. The growth rate of $s_n$ cannot be $4$, though, because this would imply that $\lim_{n \to \infty} \sqrt[n]{c_n} > 4$, which we know is not true. This leads to the question,

\begin{question}
    Is $\lim_{n \to \infty} \sqrt[n]{c_n} < 4$?
\end{question}

This would be somewhat surprising, but does not seem completely unreasonable given Corollary \ref{cor: simpleComp} and Theorem \ref{thm: upperBound}.
Finally, it might be worthwhile to try classifying cyclic avoidance using the fundamental bijection for other patterns of size $3$.

\begin{question} \label{Q: singleVinc}
    \sloppy Given $\sigma \in \{123,231,312,132,213\}$, is there a set of vincular (or mesh) patterns $\Lambda$ so that $|\C_n(\sigma)| = |\S_{n-1}(\Lambda)|$?
\end{question}

One could also ask the same question about $\C_n(132,213)$. Even a partial answer to Question \ref{Q: singleVinc} could allow us to compare the sizes of $\C_n(\sigma)$ and $\C_n(\tau)$ for $\sigma,\tau \in \S_3$ by comparing the corresponding avoidance classes. This is one possible approach to \cite[Conjecture 5.1]{BC19}, where the authors conjecture $c_n(123) \geq c_n(132) = c_n(213) \geq c_n(321) \geq c_n(312) = c_n(231)$.




\subsection*{Acknowledgments}

We thank Kassie Archer and Christina Graves for helpful conversations throughout the writing of this paper.

\subsection*{Disclaimer}
\emph{The views expressed in this paper are those of the authors and do not reflect the official policy or position of the U.S. Naval Academy, Department of the Navy, the Department of Defense, or the U.S. Government.}

\bibliographystyle{amsplain}

\begin{thebibliography}{99}

    \bibitem{AA05} M.H. Albert and M.D. Atkinson, Simple permutations and pattern restricted permutations. \textit{Disc. Math.}, 300(1-3) (2005), 1--15.

    \bibitem{ABBGJ23} K. Archer, E. Borsh, J. Bridges, C. Graves, and M. Jeske, Cyclic permutations avoiding avoiding patterns in both one-line and cycle forms. \textit{Enum. Comb. Appl.}, 5(1):Article \#S2R3, 2025.

    \bibitem{AGL25} K. Archer, C. Graves and R.P. Laudone, Decomposing pattern-avoiding cycles to obtain a lower bound. Preprint, arXiv: 2505.04456.

    \bibitem{AL24} K. Archer and R.P. Laudone, Pattern avoidance and the fundamental bijection. Preprint, arXiv:2407.06338.
    
    \bibitem{AE14} K. Archer and S. Elizalde, Cyclic permutations realized by signed shifts. \textit{J. Comb.}, 5 (2014), 1--30.

    \bibitem{BT22} Y. Berman and B. Tenner, Pattern-functions, statistics, and shallow permutations. \textit{Electron. J. Combin.} 29 (2022) P4.43.

    \bibitem{Bona} M. B\'{o}na, A Walk Through Combinatorics. World Scientific, Singapore, 4th edition, 2016.
    
    \bibitem{BC19}  M. B\'{o}na and M. Cory, Cyclic permutations avoiding pairs of patterns of length three. \textit{Discrete Math. Theor. Comput. Sci.} 21 (2019), no. 2, Paper No. 8, 15 pp.

    
    \bibitem{DG77} P. Diaconis and R. Graham, Spearman's footrule as a measure of disarray. \textit{J. R. Stat. Soc. Ser. B}, 39.2 (1977), 262-268.
    
    \bibitem{ET19} S. Elizalde and J. Troyka, Exact and asymptotic enumeration of cyclic permutations according to descent set. \textit{J. Combin. Theory Ser. A}, 165 (2019), 360--391.
    
    \bibitem{H19} B. Huang, An upper bound on the number of (132,213)-avoiding cyclic permutations. \textit{Disc. Math.}, 342(6) (2019), 1762--1771.
    
    
    \bibitem{K01} C. Krattenthaler, Permutations with restricted patterns and Dyck paths. \textit{Adv. Appl. Math.} 27 (2001), 510--530.
    
    \bibitem{OEIS} OEIS Foundation Inc. (2023), The On-Line Encyclopedia of Integer Sequences, Published electronically at \url{https://oeis.org}.
    
    
    \bibitem{Sage} Sage Developers, Sagemath, the Sage Mathematics Software System (Version 9.5.0), 2022, \texttt{https://www.sagemath.org}.

    \bibitem{S11} R.P. Stanley, ``Enumerative combinatorics Volume 1 second edition." {\textit{Cambridge studies in advanced mathematics}}, (2011).

\end{thebibliography}

\end{document}